\newtheorem{theorem}{Theorem}[section]
\newtheorem{lemma}[theorem]{Lemma}
\theoremstyle{definition}
\newtheorem{remark}[theorem]{Remark}
\newcommand{\doilink}[1]{\href{#1}{#1}}
\title{Necessary and Sufficient Conditions for Existence of a Unique Solution to Gamma Moment Closure for the Stochastic Ricker Equation}
\author{
Haiyan Wang \\ 
School of Mathematical and Natural Sciences, Arizona State University \\ 
Phoenix, AZ 85069, USA \\ 
\texttt{haiyan.wang@asu.edu}
\and
Melinda Wang \\ 
Circana \\ 
Chicago, IL 60601, USA \\ 
\texttt{melindaw@alumni.cmu.edu}
}
\date{}
\begin{document}
\maketitle

\begin{abstract}
This paper investigates the stochastic Ricker difference equation $X_{n+1} = X_n \exp(r(1-X_n)) \varepsilon_n$, where $X_n$ is a random variable representing the population size and $\{\varepsilon_n\}$ denotes independent random perturbations with $E[\varepsilon_n] = 1$ and $E[\varepsilon_n^2] = v > 1$. We derive a closed system of difference equations for the mean and variance of $X_n$ using the Gamma moment-closure technique and numerically verify the validity of the Gamma moment-closure approximation. By constructing an auxiliary function, we establish the necessary and sufficient condition, $v < (2 - e^{-r})^2$, for the existence of the positive unique feasible equilibrium. We further verify its local stability with numerical analysis. Monte Carlo simulations confirm the validity of the Gamma moment approximation and illustrate how the interplay between the intrinsic growth rate $r$ and noise intensity $v$ determines population persistence. The results provide a unified theoretical framework for analyzing stochastic Gamma dynamics, offering new biological insights into the stabilizing and destabilizing effects of environmental variability.
\end{abstract}

\textbf{keywords:} stochastic Ricker population model, difference equation, Gamma moment-closure, equilibrium, existence and uniqueness, stability    

\section{Introduction}\label{in}
The Ricker difference equation is a seminal model in ecology and population biology, particularly valued for describing species with non-overlapping generations, such as certain fish stocks and insect populations. It mathematically formalizes density-dependent regulation through an exponential feedback mechanism, which realistically prevents the population from becoming negative even during extreme crashes. By capturing the crucial balance between biological reproductive potential and environmental carrying capacity, the Ricker map serves as a cornerstone for analyzing complex nonlinear behaviors, ranging from stable equilibria to chaotic oscillations in resource-limited systems. The qualitative behavior of the model is highly sensitive to the intrinsic growth rate parameter, which determines the severity of density-dependent regulation and governs transitions between stable equilibria, periodic cycles, and chaotic dynamics \cite{ricker1954stock,murray2002,may1976simple,kot2001elements,strogatz2018nonlinear,Koetke2020}.

The integration of stochasticity into the Ricker model represents a critical advancement over the classical deterministic framework, shifting the focus from idealized trajectories to the probabilistic realities of ecological systems. By embedding random environmental perturbations directly into the growth dynamics, these extended models acknowledge that population driving forces are rarely constant but rather fluctuate in response to climate shifts and resource unpredictability. This approach replaces the singular prediction of a deterministic map with a distribution of potential futures, allowing researchers to quantify the variance and risks inherent in population trends. Accounting for such variability is critical in ecological modeling, where stochastic forcing can disrupt stable equilibria and drive populations toward instability or extinction. Consequently, the study of stochastic difference equations has evolved into a vibrant domain of research, offering essential tools for developing robust conservation strategies and sustainable harvesting policies under uncertainty \cite{allen2010,Arnold1998,kot2001elements,Schreiber2021,Braverman2013,Yan2024,Hognas1997,Kornadt1991}.

Analytical treatment of stochastic maps is frequently obstructed by the unclosed nature of the moment equations, where the dynamic evolution of lower-order statistics is mathematically coupled to higher-order terms. To resolve this infinite hierarchy without resorting to computationally expensive Monte Carlo simulations, moment-closure schemes approximate the underlying probability density to parameterize higher moments as functions of the mean and variance. This technique effectively projects the complex, stochastic trajectory of the population onto a tractable, low-dimensional deterministic map governing the statistical parameters. By reducing the stochastic system to a set of closed-form recurrence relations, this framework allows for a rigorous exploration of stability boundaries and bifurcation structures, preserving the critical impact of environmental variability while enabling efficient algebraic analysis \cite{Kuehn2016,Makarem2017,Marrec2023,Matis1996,Du2005,Hyon2008,allen2010,Socha2008,Lakatos_2015,Taylor2012}. 

In particular, \cite{Lakatos_2015} expands the spectrum of moment-closure techniques by introducing a multivariate Gamma approximation, designed to complement standard Gaussian methods. While the Gaussian closure serves as a robust and efficient baseline for many stochastic systems, its symmetric support can be limiting when modeling populations that are strictly positive or highly skewed. The Gamma closure addresses these specific structural characteristics by naturally enforcing non-negative constraints and capturing the asymmetric fluctuations often observed in biological dynamics. By formalizing the multivariate extension of this distribution, the authors provide a framework that preserves the physical realism of the system while maintaining the computational tractability essential for analyzing complex stochastic dynamics.

In this paper, we study the stochastic Ricker difference equation
\begin{equation}
  X_{n+1} = X_n e^{r(1 - X_n)} \varepsilon_n,\, n=1,2,... \label{eq:model}
\end{equation}
where \( X_n \) is a random variable representing population size at time \( n \), $r > 0$ is a constant representing the intrinsic growth rate and $\varepsilon_n$ 
is a random variable accounting for stochastic perturbations, independent of $X_n$, 
with $E[\varepsilon_n] = 1$ and $E[\varepsilon_n^2] = v > 1$.  Under the assumption of the Gamma moment-closure approximation, we derive a system of difference equations and establish the necessary and sufficient condition ($v< \big(2-e^{-r}\big)^2$), for the existence of a unique feasible solution.  Furthermore, we numerically demonstrate that these solutions are locally stable. The study also includes the assessment of the validity of the Gamma moment approximation. These findings offer a unified framework that connects the stochastic Gamma model's parameters to population persistence, highlighting the role of environmental variability as a stabilizing force.

The study continues the work on the stochastic logistic and Ricker equations \cite{wang2025,Wang2026,wang2025chaos}, where the authors investigate the dynamics of the stochastic models \cite{wang2025}, stability and bifurcation of the stochastic logistic equation under the Gaussian moment approximation \cite{Wang2026}, and the transitions from steady states to chaos in the stochastic models \cite{wang2025chaos}. The key new contributions of this paper include:
\begin{enumerate}
    \item The development of a closed system of difference equations from the stochastic Ricker model under the Gamma moment-closure approximation, establishing a moment-based framework for analyzing stochastic dynamics;
    \item A numerical assessment of the validity of the Gamma moment-closure approximation for the stochastic Ricker equation;
    \item The establishment of the necessary and sufficient condition ($v< \big(2-e^{-r}\big)^2$) for the existence of a unique feasible equilibrium of the closed system of difference equations by constructing and analyzing an auxiliary function. Furthermore, numerical analysis demonstrates that the region of stability coincides exactly with the existence region of the unique feasible equilibrium. These results provide new biological insights into how environmental variability constrains population stability and drives extinction risk.
    \item An investigation, using Monte Carlo simulations, of the effects of stochastic perturbations and distributional parameters on the intrinsic growth rate and population persistence.  
\end{enumerate}

The remainder of this paper is organized as follows.
In Section~\ref{differnceE}, we employ the Gamma moment-closure technique to derive a closed system of difference equations governing the mean and variance dynamics of the stochastic model.
Section~\ref{feasb} establishes the mathematical conditions for the existence and uniqueness of feasible positive equilibria.
Subsequently, Section~\ref{stables} provides a numerical analysis of the stability regions for these equilibria.
Finally, Section~\ref{discuss} summarizes the key findings, discusses their biological implications for population persistence in fluctuating environments.

\section{Gamma Moment Closure and the Resulting Difference Equations} \label{differnceE}

Given the random variable \(X\) and assume it follows the Gamma distribution with shape \(k>0\) and scale \(\theta>0\),
\[
X \sim \operatorname{Gamma}(k,\theta),
\]
having probability density
\[
f_{X}(x)=\frac{x^{k-1}e^{-x/\theta}}{\Gamma(k)\,\theta^{k}},\qquad x>0,
\]
where $\Gamma(k)$ is the Gamma function and 
\[
\mu=\mathbb{E}[X]= k\theta,\qquad s=\mathrm{Var}[X]=k\theta^2
\]
and hence \(k=\mu^2/s\), \(\theta=s/\mu\) \cite{wikipedia_gamma}. It can be further proved that the identity (see Appendix of \cite{wang2025}) holds for $n=1,2,...$
\begin{equation}\label{ident}
\mathbb{E}\big[X^n e^{-\tau X}\big]=\frac{\Gamma(k+n)}{\Gamma(k)}\frac{\theta^n}{(1+\tau \theta)^{\,k+n}}, \tau>0
\end{equation}
In particular, for $n=1,2$, we have, for $\tau >0$
\begin{align}
\begin{aligned}
\mathbb{E}\big[X e^{- \tau X}\big] &= \frac{k\theta}{(1+ \tau\theta)^{\,k+1}}= \frac{\mu}{(1+ \tau\theta)^{\,k+1}}, \\
\mathbb{E}\big[X^2 e^{- \tau X}\big] &= \frac{k(k+1)\theta^2}{(1+\tau\theta)^{\,k+2}}=\frac{\mu^2+s}{(1+\tau\theta)^{\,k+2}}.
\end{aligned}
\label{r10}
\end{align}
where $ k=\frac{\mu^2}{s}, \, \theta=\frac{s}{\mu}$

To obtain a tractable system for the stochastic difference equations, we adopt the Gamma moment-closure approximation. This approach does not assume that $X_n$ itself follows the Gamma distribution,  but rather that its first few moments satisfy the same relationships as those of a Gamma random variable. 
Specifically, we assume that the moments of $X_n, n=1,2,...$, satisfy
\begin{align}
\begin{aligned}
\mu_n \qquad\qquad & = \mathbb{E}[X_n]\\
 s_n \qquad\qquad &= \mathrm{Var}(X_n) \\
\mathbb{E}\big[X_n e^{-\tau X_n}\big] &= \frac{\mu_n}{\big(1 + \tau \theta_n\big)^{k_n+1}}, \\
\mathbb{E}\big[X_n^2 e^{-\tau X_n}\big] &= \frac{\mu_n^2+s_n}{(1+\tau \theta_n)^{k_n+2}}.
\end{aligned}
\label{r1}
\end{align}
where $ k_n=\frac{\mu_n^2}{s_n}, \, \theta_n=\frac{s_n}{\mu_n}$ and $\mu_n, s_n>0, \tau>0$.

The Gamma moment-closure approximation closes the infinite hierarchy of moment equations and allows use to derive consistency conditions to preserve the nonnegativity of the mean of $X_n$ for biological feasibility.

\subsection{Derivation of the Recurrence Relations}

We now derive a system of difference equation for the first and second moments of $X_{n}$ directly from \eqref{eq:model}.

\begin{theorem}[Gamma moment closure equations for the stochastic Ricker map] \label{differeceEq}
Consider the stochastic Ricker difference equation \eqref{eq:model}. Suppose that $\varepsilon_n$ 
is any random variable and independent of $X_n$ with $$E[\varepsilon_n] = 1, \,\,E[\varepsilon_n^2] = v > 1.$$ Assume that $X_n$ satisfies the Gamma moment-closure approximation \eqref{r1}.  Then its mean and variance \((\mu_n,s_n)\) updates in \eqref{eq:model}  can be written as follows:

\begin{equation}\label{eq:closed-mu-s}
(\mu_{n+1},s_{n+1}) = (G_1(\mu_n,s_n), G_2(\mu_n,s_n))
\end{equation}
where
\begin{align}
\begin{aligned}
\mu_{n+1} &= G_1(\mu_n,s_n)= e^{r}\,\frac{\mu_n}{\big(1 + r \theta_n\big)^{k_n+1}}, \\
s_{n+1} & =G_2(\mu_n,s_n)= v\,e^{2r}\,\frac{\mu_n^2+s_n}{(1+2r \theta_n)^{k_n+2}}
- e^{2r}\,\frac{\mu_n^2}{(1+r \theta_n)^{2(k_n+1)}}.
\end{aligned}
\label{expressions}
\end{align}
where $ k_n=\frac{\mu_n^2}{s_n}, \, \theta_n=\frac{s_n}{\mu_n}$.
\end{theorem}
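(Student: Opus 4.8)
The plan is to obtain both update rules by applying the expectation operator directly to the square-free and squared versions of \eqref{eq:model}, exploiting the independence of $\varepsilon_n$ from $X_n$ to factor the moments of the noise out of the population moments, and then invoking the Gamma closure relations \eqref{r1} to evaluate the two mixed expectations $\mathbb{E}[X_n e^{-\tau X_n}]$ and $\mathbb{E}[X_n^2 e^{-\tau X_n}]$ in closed form. Since the entire computation rests on linearity of expectation and the factorization $\mathbb{E}[f(X_n)\varepsilon_n^m]=\mathbb{E}[f(X_n)]\,\mathbb{E}[\varepsilon_n^m]$, the argument is essentially a direct substitution once the correct value of $\tau$ is matched to each term.

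For the mean, I would write $\mu_{n+1}=\mathbb{E}[X_{n+1}]=\mathbb{E}\big[X_n e^{r(1-X_n)}\varepsilon_n\big]$, pull out the deterministic factor $e^{r}$, and use independence together with $\mathbb{E}[\varepsilon_n]=1$ to reduce this to $e^{r}\,\mathbb{E}[X_n e^{-rX_n}]$. Applying the first closure relation in \eqref{r1} with $\tau=r$ then yields $\mu_{n+1}=e^{r}\mu_n/(1+r\theta_n)^{k_n+1}$, which is exactly $G_1$. For the variance I would start from $s_{n+1}=\mathbb{E}[X_{n+1}^2]-\mu_{n+1}^2$. Squaring \eqref{eq:model} gives $X_{n+1}^2=X_n^2 e^{2r(1-X_n)}\varepsilon_n^2$; factoring out $e^{2r}$ and using independence with $\mathbb{E}[\varepsilon_n^2]=v$ reduces the second raw moment to $v\,e^{2r}\,\mathbb{E}[X_n^2 e^{-2rX_n}]$. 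The second closure relation in \eqref{r1}, now with $\tau=2r$, evaluates this as $v\,e^{2r}(\mu_n^2+s_n)/(1+2r\theta_n)^{k_n+2}$. Subtracting $\mu_{n+1}^2=e^{2r}\mu_n^2/(1+r\theta_n)^{2(k_n+1)}$ produces $G_2$.

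The computation has no serious obstacle, but the step that carries all the content is the correct matching of $\tau$ to each term: the linear moment is closed at $\tau=r$ while the quadratic raw moment is closed at $\tau=2r$, because squaring the map doubles the exponential rate. I would also emphasize at the outset that \eqref{r1} is an \emph{assumption} rather than a derived identity, so the higher moments $\mathbb{E}[X_n e^{-rX_n}]$ and $\mathbb{E}[X_n^2 e^{-2rX_n}]$ are expressible through $\mu_n$ and $s_n$ only by virtue of the closure; this is precisely what truncates the otherwise infinite moment hierarchy and makes \eqref{eq:closed-mu-s} a self-contained two-dimensional map. The remaining care is purely bookkeeping: keeping the substitutions $k_n=\mu_n^2/s_n$ and $\theta_n=s_n/\mu_n$ consistent and confirming that the exponent $2(k_n+1)$ in the subtracted term arises as the square of the $(k_n+1)$ exponent in $\mu_{n+1}$.
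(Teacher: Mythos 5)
Your proposal is correct and follows essentially the same route as the paper's proof: take expectations of the map and of its square, use independence of $\varepsilon_n$ to factor out $\mathbb{E}[\varepsilon_n]=1$ and $\mathbb{E}[\varepsilon_n^2]=v$, apply the closure relations \eqref{r1} with $\tau=r$ and $\tau=2r$ respectively, and subtract $\mu_{n+1}^2$ to obtain $s_{n+1}$. Your explicit remarks on matching $\tau$ to each term and on \eqref{r1} being an assumption rather than a derived identity are accurate and, if anything, spell out the logic more fully than the paper does.
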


\begin{proof}
Based on the Gaussian moment-closure approximation \eqref{r1}, we obtain
\begin{align}
\mu_{n+1}
&= e^{r}\,\mathbb{E}\big[X_n e^{-r X_n}\big] E[\varepsilon_n]\\
&= e^{r}\,\frac{\mu_n}{\big(1 + r \theta_n\big)^{k_n+1}},
\label{eq:ricker-mu}
\end{align}
and
\begin{align}
\mathbb{E}[X_{n+1}^2]
&= v\,e^{2r}\,\mathbb{E}\big[X_n^2 e^{-2rX_n}\big] E[\varepsilon_n^2] \\
&= v\,e^{2r}\,\frac{\mu_n^2+s_n}{(1+2r \theta_n)^{k_n+2}}.
\label{eq:ricker-2nd}
\end{align}
Hence the variance update
$
s_{n+1} =\mathbb{E}[X_{n+1}^2] - \mu_{n+1}^2
$
gives the closed map

\end{proof}

\begin{remark}\label{rem:limit} Under the assumption of Theorem \ref{differeceEq}. Assume that $ \lim_{n \to \infty } \mu_n=\mu>0$.  The moment dynamical system \eqref{eq:closed-mu-s} will become the classical Ricker map
 \begin{equation}\label{classicalRicker}
 \mu_{n+1} = \mu_n e^{r(1-\mu_n)}   
 \end{equation}
as the variance $s_n \to 0$ and the noise intensity $v \to 1$. For demonstration of the relationship between \eqref{eq:closed-mu-s} and the classical Ricker map \eqref{classicalRicker}, we assume that $\mu_n$ will be close to constant $\mu$ for large $n$, relative to the limit $k_n \to \infty$, thereby justifying the use of the exponential limit theorem.
\end{remark}

\begin{proof}
We consider the limit as the population variance vanishes ($s_n \to 0$). In view of the assumption, we have
 $$k_n = \frac{\mu_n^2}{s_n} \to \infty$$
and 
$$\theta_n = \frac{s_n}{\mu_n} \to 0$$
From these definitions in Theorem \ref{differeceEq}, we observe the fundamental relationship:
\[
k_n \theta_n = \left(\frac{\mu_n^2}{s_n}\right) \left(\frac{s_n}{\mu_n}\right) = \mu_n
\]
This implies $\theta_n =  \frac{\mu_n}{k_n}$.  
Let $D_n$ be the denominator term in the mean equation. We substitute $\theta_n = \frac{\mu_n}{k_n}$:
\[
D_n = \left(1 + r \theta_n\right)^{k_n+1} = \left(1 + \frac{r \mu_n}{k_n}\right)^{k_n+1}
\]
We separate the exponent to isolate the standard exponential limit:
\[
D_n = \left[ \left(1 + \frac{r \mu_n}{k_n}\right)^{k_n} \right] \cdot \left(1 + \frac{r \mu_n}{k_n}\right)^1
\]
Taking the limit as $k_n \to \infty$ and using the definition $e^x = \lim_{t \to \infty} (1 + \frac{x}{t})^t$, we have:
    \[
    \lim_{k_n \to \infty} \left(1 + \frac{r \mu_n}{k_n}\right)^{k_n} = e^{r \mu_n}
    \]
    \[
    \lim_{k_n \to \infty} \left(1 + \frac{r \mu_n}{k_n}\right) = 1 + 0 = 1
    \]
Thus, the denominator converges to:
\[
\lim_{s_n \to 0} D_n = e^{r \mu_n}
\]
Substituting the limit back into the state equation for $\mu_{n+1}$:
\[
\mu_{n+1} = \mu_n e^r \frac{1}{e^{r \mu_n}} = \mu_n e^{r - r\mu_n}
\]
\[
\mu_{n+1} = \mu_n e^{r(1-\mu_n)}
\]
This confirms that as the variance $s_n$ collapses to zero, the system recovers the deterministic Ricker map.

We must verify that the variance $s_{n+1}$ vanishes in the limiting case where $s_n \to 0$ and $v \to 1$.
The variance evolution is given by the difference of two terms, $s_{n+1} = T_1 - T_2$:
\[
T_{1} = v e^{2r} \frac{\mu_n^2+s_n}{(1+2r \theta_n)^{k_n+2}} 
\]
and
\[
T_2 =  e^{2r} \frac{\mu_n^2}{(1+r \theta_n)^{2(k_n+1)}}
\]
As $s_n \to 0$, we have $\mu_n^2 + s_n \to \mu_n^2$. For the denominator, we use the identity $\theta_n = \mu_n / k_n$:
\[
\lim_{k_n \to \infty} (1 + 2r \theta_n)^{k_n+2} = \lim_{k_n \to \infty} \left[\left(1 + \frac{2r\mu_n}{k_n}\right)^{k_n} \cdot \left(1 + \frac{2r\mu_n}{k_n}\right)^2 \right]
\]
\[
= e^{2r\mu_n} \cdot 1 = e^{2r\mu_n}
\]
Substituting this limit and setting $v=1$:
\[
\lim T_1 = e^{2r} \frac{\mu_n^2}{e^{2r\mu_n}} = \mu_n^2 e^{2r(1-\mu_n)}
\]

We analyze the denominator with the exponent $2(k_n+1)$:
\[
\lim_{k_n \to \infty} (1 + r \theta_n)^{2k_n+2} = \lim_{k_n \to \infty} \left[ \left(1 + \frac{r\mu_n}{k_n}\right)^{k_n} \right]^2
\]
\[
= (e^{r\mu_n})^2 = e^{2r\mu_n}
\]
Substituting this limit:
\[
\lim T_2 = e^{2r} \frac{\mu_n^2}{e^{2r\mu_n}} = \mu_n^2 e^{2r(1-\mu_n)}
\]

Subtracting the two asymptotic terms:
\[
\lim_{s_n \to 0} s_{n+1} = T_1 - T_2 = \mu_n^2 e^{2r(1-\mu_n)} - \mu_n^2 e^{2r(1-\mu_n)} = 0
\]
Thus, the system is consistent: if the variance is zero at step $n$, it remains zero at step $n+1$, preserving the deterministic nature of the Ricker map derived in this section.

\end{proof}

\subsection{Validity of the Moment Closure Approximation}
To assess the validity of the Gamma moment approximation,  we performed a series of Monte--Carlo (MC) and moment--map simulations over an increasing sequence of growth rates and multiplicative noise intensity. For the numerical simulations, we evolved an ensemble of \(N = 100{,}000\) independent realizations of the stochastic Ricker map. To maintain consistency with the assumptions of the Gamma moment closure from the initial time \(t=0\), the initial condition \(X_0\) for each realization was drawn independently from a Gamma distribution with mean \(\mu_0 = 0.5\) and variance \(s_0 = 0.02\). Each trajectory was first evolved for a transient period of 1{,}000 iterations, which was discarded to eliminate the influence of initial conditions, and statistics were then collected over the subsequent 500 iterations.  The system subsequently evolves according to the map \eqref{eq:model}, and the multiplicative noise $\varepsilon_n$ is log-normally distributed with $\mathbb{E}[\varepsilon_n]=1$ and $\mathbb{E}[\varepsilon_n^2]=v$. A transient period was discarded to ensure the system reached its stationary state, after which empirical histograms (gray shaded regions) were computed. These are overlaid with the theoretical probability density functions (red curves) derived by iterating the Gamma moment closure recursions for the mean $\mu_n$ and variance $s_n$ in  \eqref{eq:closed-mu-s}, initialized with the exact same parameters $\mu_0$ and $s_0$.

\begin{figure}[h!]
    \centering
    
    \begin{subfigure}[b]{0.32\textwidth}
        \centering
        \includegraphics[width=\textwidth]{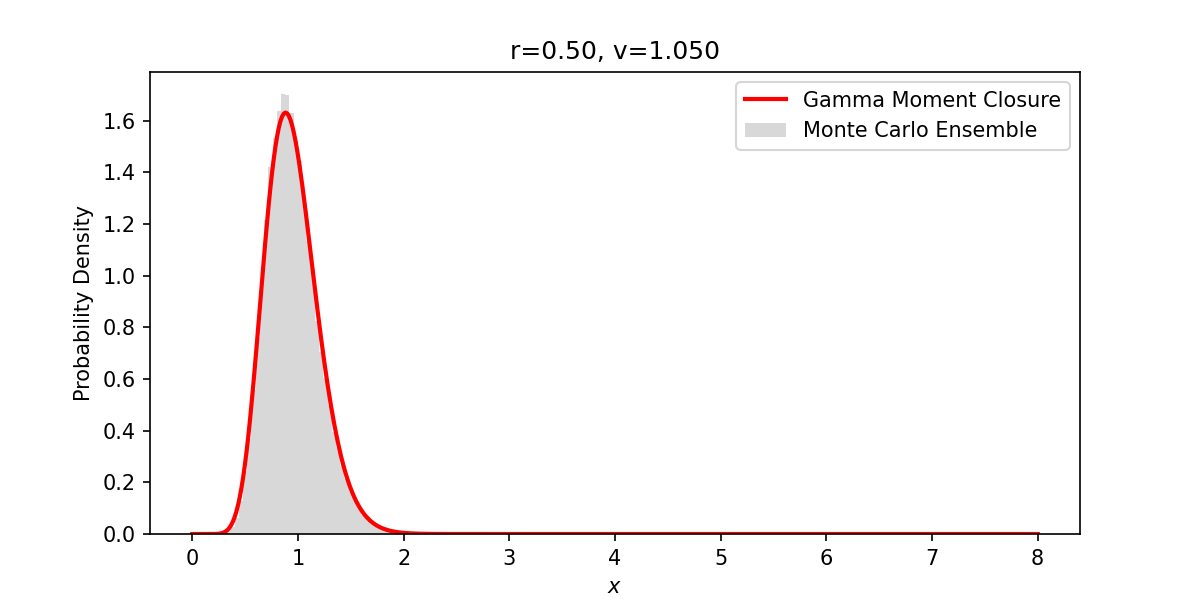}
        \caption{$r=0.50$, $v=1.05$}
        \label{fig:valida_r050_v105}
    \end{subfigure}
    \hfill
    \begin{subfigure}[b]{0.32\textwidth}
        \centering
        \includegraphics[width=\textwidth]{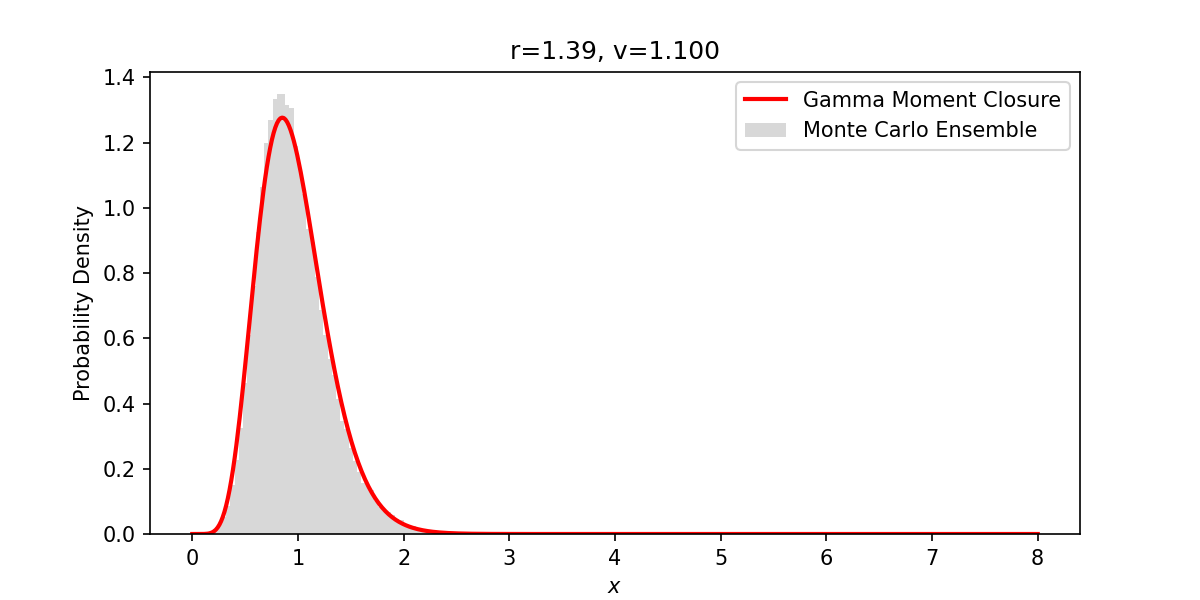}
        \caption{$r=1.39$, $v=1.10$}
        \label{fig:valida_r139_v110}
    \end{subfigure}
    \hfill
    \begin{subfigure}[b]{0.32\textwidth}
        \centering
        \includegraphics[width=\textwidth]{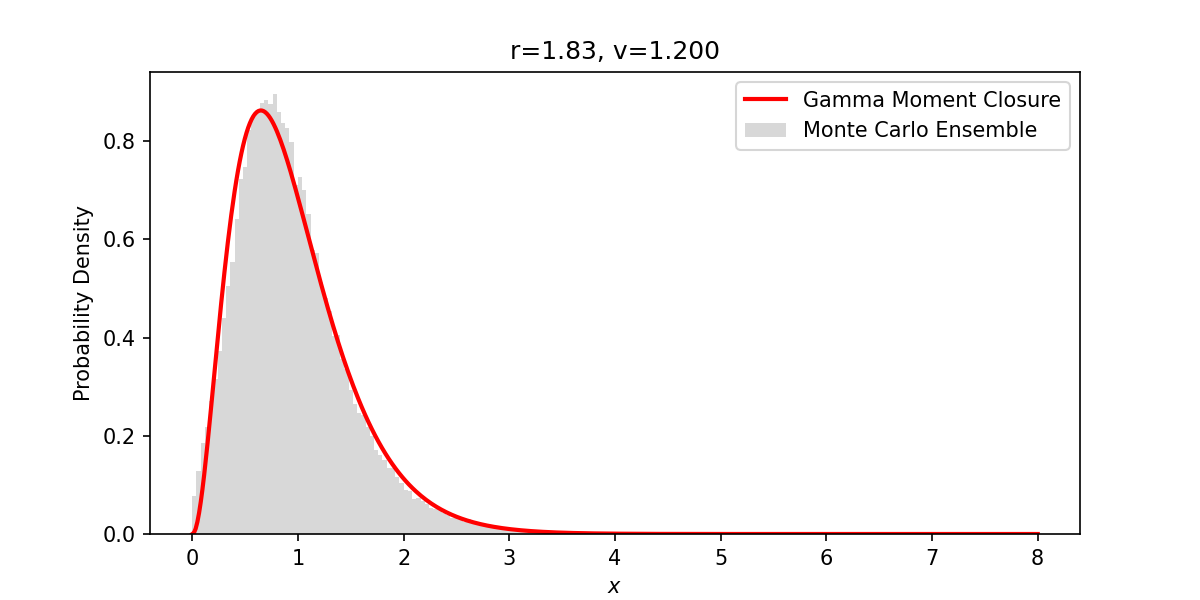}
        \caption{$r=1.83$, $v=1.20$}
        \label{fig:valida_r183_v120}
    \end{subfigure}
    
    \vspace{1em}
    
    \begin{subfigure}[b]{0.32\textwidth}
        \centering
        \includegraphics[width=\textwidth]{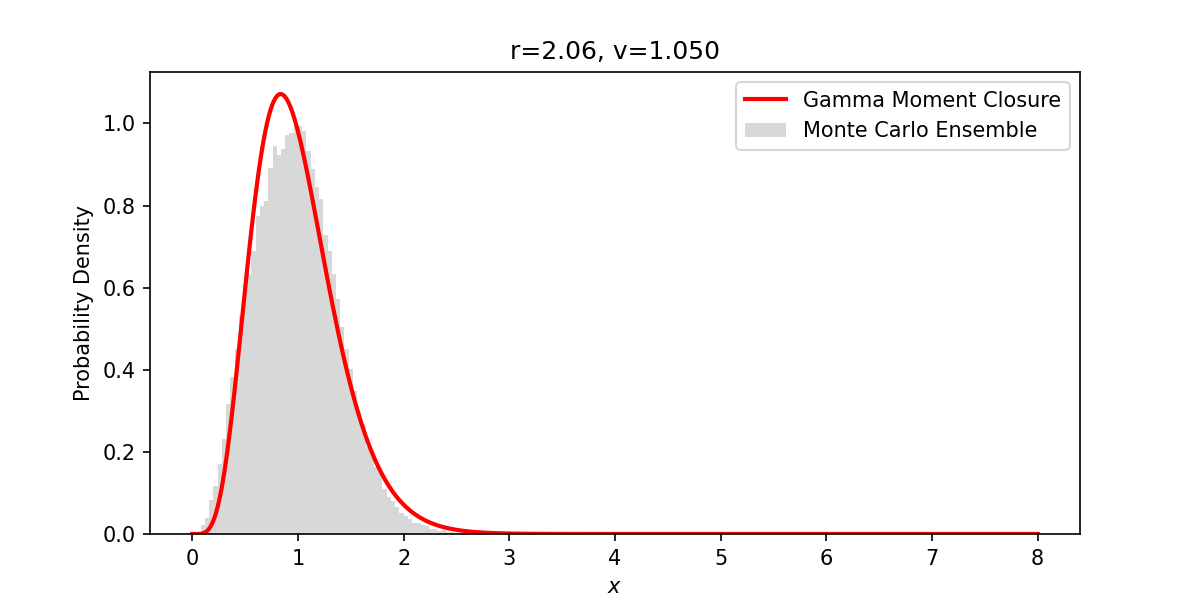}
        \caption{$r=2.06$, $v=1.050$}
        \label{fig:valida_r206_v105}
    \end{subfigure}
    \hfill
    \begin{subfigure}[b]{0.32\textwidth}
        \centering
        \includegraphics[width=\textwidth]{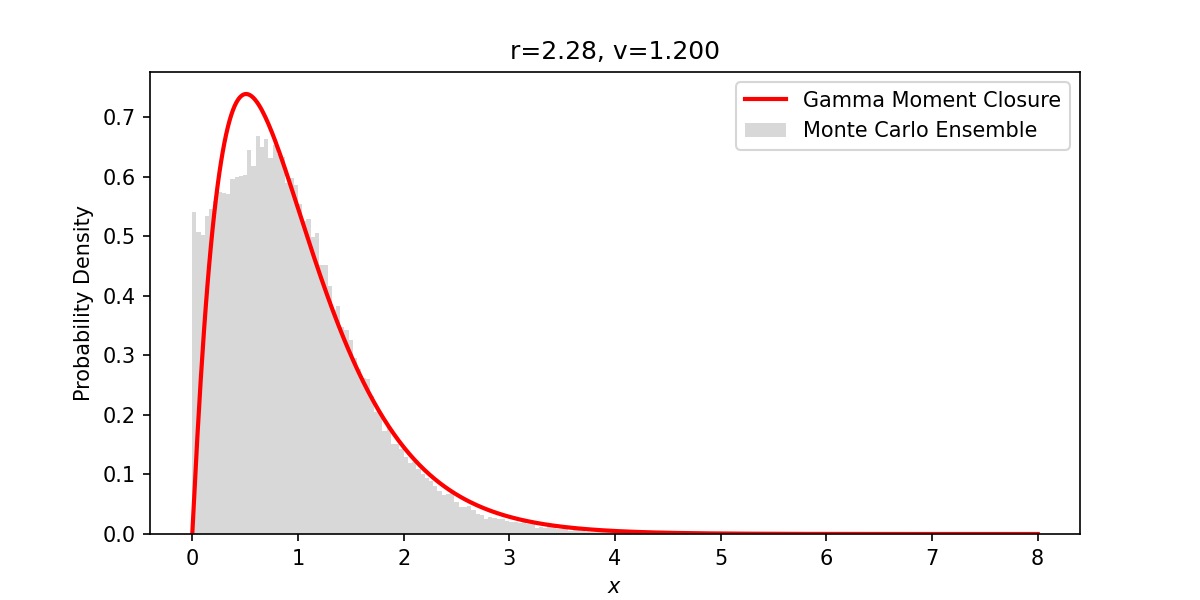}
        \caption{$r=2.28$, $v=1.20$}
        \label{fig:valida_r228_v120}
    \end{subfigure}
    \hfill
    \begin{subfigure}[b]{0.32\textwidth}
        \centering
        \includegraphics[width=\textwidth]{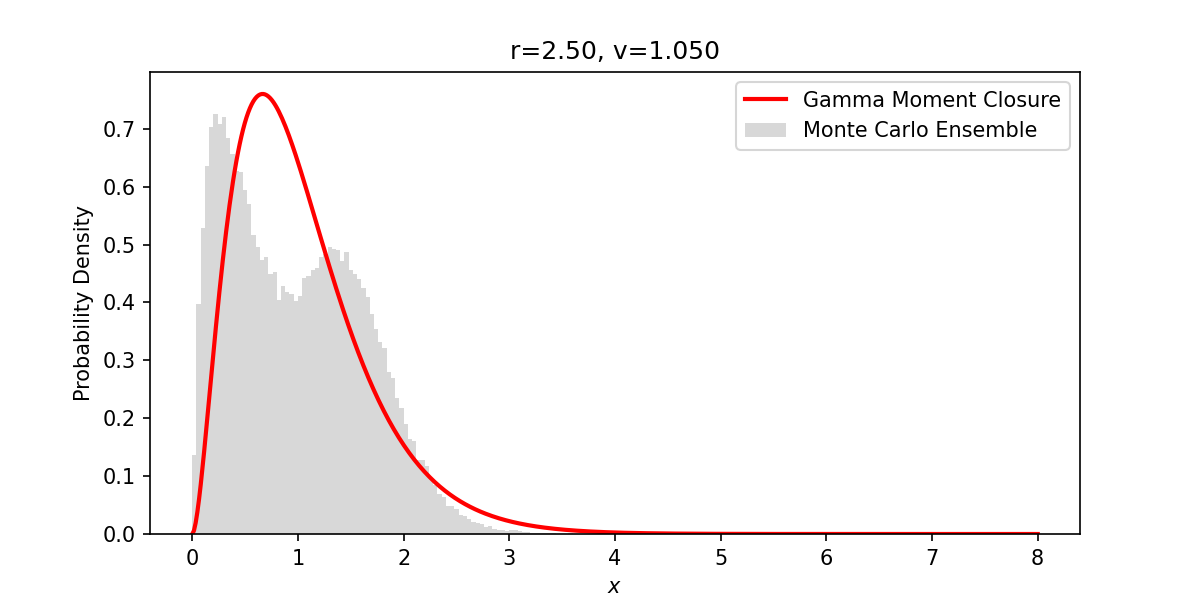}
        \caption{$r=2.50$, $v=1.05$}
        \label{fig:valida_r250_v105}
    \end{subfigure}
    
    \caption{Comparison of stationary distribution with Gamma moment--closure dynamics.}
    \label{fig:valida}
\end{figure}

Figure~\ref{fig:valida} compares the stationary distribution of the stochastic Ricker map obtained using two distinct approaches: direct numerical Monte--Carlo simulations of the stochastic system \eqref{eq:model} and the analytical prediction provided by the Gamma moment closure \eqref{eq:closed-mu-s}. The comparison demonstrates that the accuracy of the Gamma approximation depends sensitively on the underlying parameter regime.

In the regime \(0<r<2\), where the deterministic Ricker map \eqref{classicalRicker} possesses a globally stable equilibrium \cite{may1976simple}, the stationary distribution of \eqref{eq:model} is well approximated by the Gamma moment closure even under relatively strong multiplicative noise (\(1<v \leq 1.2\)). This agreement is evident in Figures~\ref{fig:valida_r050_v105}, \ref{fig:valida_r139_v110}, and \ref{fig:valida_r183_v120}, where the empirical Monte--Carlo histograms closely match the corresponding Gamma distributions in both shape and spread.

When \(r>2\), the deterministic Ricker map \eqref{classicalRicker} becomes unstable and undergoes a sequence of period-doubling bifurcations \cite{may1976simple}. In this regime, the performance of the Gamma moment closure becomes more sensitive. For moderate noise intensities (\(1<v \leq 1.20\)), the stationary distribution is still captured reasonably well by the Gamma approximation, as illustrated in Figures~\ref{fig:valida_r206_v105} and ~\ref{fig:valida_r228_v120}. However, as $r$ increases,  even with small noise intensity (\(v =1.05\)), significant discrepancies emerge. In Figure \ref{fig:valida_r250_v105}, the Gamma moment closure fails to reproduce the noticeable peaks of the empirical stationary distributions. Nevertheless, it still captures the overall tail behavior reasonably well, providing a qualitatively accurate description of the decay of the distribution at large population sizes.

These observations indicate that while the Gamma moment closure provides an effective low-dimensional description in deterministically stable regimes, its validity deteriorates in parameter regions where nonlinear instabilities and strong noise jointly shape the stationary distribution.

\section{Feasible Positive Equilibrium}\label{feasb}

Note that both $\mu_n$ and $s_n$ in system \eqref{eq:closed-mu-s} can not be zero, our attention turns to feasible positive equilibria $(\mu,s)$ such that $\mu>0,s>0$. We begin by constructing an auxiliary function to analyze the existence and uniqueness of the positive equilibrium.

\subsection{Auxiliary function $F$}

\begin{theorem}[ Auxiliary function $F$]\label{auxi} Under the assumptions of Theorem \ref{differeceEq}. Assume that for $r>0, v>1$, and  define the following function of $z$ for \(z>0\): 
\begin{equation}\label{F1}
F(z;r,v)\;:=\;\Big(\frac{r}{\ln(1+z)}+1\Big)\ln(1+2z)\;-\;2r-\ln v,
\qquad z>0. 
\end{equation}
Then \((\mu,s)\) with $\mu>0,s>0$ is a feasible equilibrium  of Gamma moment-closure system \eqref{eq:closed-mu-s} if and only if
\begin{equation}\label{F2}
\mu \;=\; \frac{z\big(r-\ln(1+z)\big)}{r\ln(1+z)},
\qquad
s \;=\; \frac{\mu z}{r}.
\end{equation}
where $z>0$ is the solution of $F(z;r,v)=0$.
\end{theorem}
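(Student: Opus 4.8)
The plan is to reduce the two coupled fixed-point equations coming from \eqref{eq:closed-mu-s} to a single scalar equation through the substitution $z=r\theta=rs/\mu$, and then to show that this scalar equation is exactly $F(z;r,v)=0$ while \eqref{F2} merely records how $(\mu,s)$ is reconstructed from $z$. At any feasible equilibrium we have $\mu,s>0$, so $k=\mu^2/s>0$ and $\theta=s/\mu>0$ are well defined and positive; hence $z=r\theta$ ranges over $(0,\infty)$ and the substitution is invertible, with the identity $k\theta=\mu$ available throughout.

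First I would treat the mean equation $\mu=e^{r}\mu/(1+r\theta)^{k+1}$. Since $\mu>0$ it may be cancelled, giving $(1+z)^{k+1}=e^{r}$, equivalently $(k+1)\ln(1+z)=r$. Because $k>0$, this forces $\ln(1+z)=r/(k+1)<r$, so $z<e^{r}-1$ holds automatically, and solving yields $k+1=r/\ln(1+z)$. Combining with $k=\mu/\theta=r\mu/z$ gives the first formula in \eqref{F2}, and $s=\mu\theta=\mu z/r$ gives the second.

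Next I would substitute into the variance equation $s=v\,e^{2r}(\mu^2+s)/(1+2z)^{k+2}-e^{2r}\mu^2/(1+z)^{2(k+1)}$. The decisive simplification is that $(1+z)^{k+1}=e^{r}$ collapses the subtracted term to exactly $\mu^2$, so the equation becomes $\mu^2+s=v\,e^{2r}(\mu^2+s)/(1+2z)^{k+2}$. Dividing by $\mu^2+s>0$ leaves $(1+2z)^{k+2}=v\,e^{2r}$, i.e. $(k+2)\ln(1+2z)=2r+\ln v$, and replacing $k+2=r/\ln(1+z)+1$ produces precisely $F(z;r,v)=0$. The converse reverses each step: given $z>0$ with $F(z;r,v)=0$, one defines $(\mu,s)$ by \eqref{F2}, reads off $\theta=z/r$ and $k=r/\ln(1+z)-1$, and verifies that both equations of \eqref{eq:closed-mu-s} hold.

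The point requiring care is the bookkeeping of feasibility. In the forward direction $z<e^{r}-1$, and hence $\mu>0$, is automatic from $k>0$; in the converse direction I must restrict attention to those roots $z$ with $\ln(1+z)<r$, since only then does the formula for $\mu$ return a positive value, which is the genuine feasibility constraint singling out admissible roots of $F$. The algebra itself --- cancelling $\mu$, using $(1+z)^{k+1}=e^{r}$ to eliminate the second variance term, and rewriting $k+2$ via $\ln(1+z)$ --- is routine; the real work is confirming that every cancellation is reversible, so that the chain of implications is a genuine equivalence and the admissible range $z\in(0,e^{r}-1)$ is preserved in both directions.
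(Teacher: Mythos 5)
Your proposal is correct and follows essentially the same route as the paper: introduce $z=r\theta$, cancel $\mu$ in the mean equation to get $(k+1)\ln(1+z)=r$, use $(1+z)^{k+1}=e^{r}$ to collapse the subtracted variance term to $\mu^{2}$, and divide by $\mu^{2}+s>0$ to arrive at $F(z;r,v)=0$ with $(\mu,s)$ reconstructed via \eqref{F2}. Your explicit bookkeeping of the converse direction --- noting that only roots with $\ln(1+z)<r$ yield $\mu>0$ --- is in fact slightly more careful than the paper's remark that ``the derivation works for both directions,'' and matches the feasibility constraint the paper defers to Theorems \ref{existence} and \ref{rem1}.
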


\begin{proof}
Assume $(\mu, s)$ with \(\mu>0,s>0\) is an equilibrium of \eqref{eq:closed-mu-s}. Note that $k=\frac{\mu^2}{s}, \theta=\frac{s}{\mu}$. From the first equilibrium condition of \eqref{eq:closed-mu-s} 
\[
\mu = e^{r}\,\frac{\mu}{(1 + r\theta)^{k+1}}
\]
we may cancel \(\mu\) and obtain
\[
(1 + r\theta)^{k+1} = e^{r}.
\]
Taking logarithms yields the identity
\begin{equation}\label{ad}
(k+1)\ln(1+r\theta) = r. 
\end{equation}
Introduce the auxiliary variable
\[
z := r\theta = \frac{r s}{\mu},
\]
so that \(1+r\theta=1+z\) and \(1+2r\theta=1+2z\).  Using \(k=\dfrac{\mu^2}{s}\) and \(\theta=\dfrac{s}{\mu}\) one finds
\[
k+1 = \mu \frac{\mu}{s}+1 = \mu \frac{r}{z}+1= \frac{\mu r}{z}+1.
\]
Thus equation \eqref{ad} becomes
\[
\Big(\frac{\mu r}{z}+1\Big)\ln(1+z) = r.
\]
Solving this for \(\mu r/z\) gives
\[
\frac{\mu r}{z} = \frac{r}{\ln(1+z)} - 1.
\]
Hence
\[
\mu \;=\; \frac{z}{r}\Big(\frac{r}{\ln(1+z)} - 1\Big)
= \frac{z\big(r-\ln(1+z)\big)}{r\ln(1+z)},
\]
which is the first relation in \eqref{F2}. Then \(s=\mu\theta=\mu z/r\), producing the second relation in  \eqref{F2}.

Next we derive \eqref{F1}.  From the first equilibrium condition of \eqref{eq:closed-mu-s}, we have
\[
(1 + r\theta)^{2(k+1)} = e^{2r},
\]
so the last term in the variance update simplifies at equilibrium:
\[
e^{2r}\,\frac{\mu^2}{(1 + r\theta)^{2(k+1)}} = \mu^2.
\]
Thus the variance equilibrium relation reduces to
\[
s = v e^{2r}\,\frac{\mu^2+s}{(1+2r\theta)^{k+2}} - \mu^2,
\]
or equivalently
\[
(\mu^2+s)\Big[1 - \frac{v e^{2r}}{(1+2r\theta)^{k+2}}\Big] = 0.
\]
Since \(\mu^2+s>0\) for any nontrivial equilibrium, we obtain
\begin{equation}\label{afd}
(1+2r\theta)^{k+2} = v e^{2r}.
\end{equation}
Since  $$1+2r\theta=1+2z$$  and by \eqref{ad}, it follows  
$$k+2 = (k+1)+1 = \dfrac{r}{\ln(1+z)}+1$$
Thus, taking logarithms and substituting in \eqref{afd}, we have 
\[
\Big(\frac{r}{\ln(1+z)}+1\Big)\ln(1+2z) = 2r + \ln v,
\]
which rearranges to the scalar equation \(F(z;r,v)=0\) displayed in \eqref{F1}. Now note that the derivation works for both the ``if" direction and the ``only if" direction. This completes the reduction: any positive equilibrium of  \eqref{eq:closed-mu-s} corresponds to a positive root \(z>0\) of \(F\) in  \eqref{F1}, and \((\mu,s)\) is then given by \eqref{F2}.
\end{proof}

\subsection{Existence of feasible equilibrium}
\begin{theorem}[Existence of Feasible Gamma closure equilibrium]\label{existence} Under the assumptions of Theorem \ref{differeceEq} and assume that
\begin{equation}\label{condition}
v \;<\; R(r):=\big(2-e^{-r}\big)^2.
\end{equation}
Let
\[
F(z;r,v)\;=\;\Big(\frac{r}{\ln(1+z)}+1\Big)\ln(1+2z)\;-\;2r-\ln v,\qquad z>0,
\]
with parameters $r>0$ and $v>1$. Then for every $r>0$ and $v>1$, the scalar equation
\[
F(z;r,v)=0,\qquad z>0,
\]
have at least one solution $z^*>0$ with $r-\ln(1+z^*)>0$. Consequently, Gamma moment-closure system \eqref{eq:closed-mu-s} possesses a nontrivial equilibrium $(\mu^*,s^*)$ (with $\mu^*>0,\ s^*>0$), given by
\[
\mu^*=\frac{z^*\big(r-\ln(1+z^*)\big)}{r\ln(1+z^*)},\qquad
s^*=\frac{\mu^* z^*}{r}.
\]
\end{theorem}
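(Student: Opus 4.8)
The plan is to apply the intermediate value theorem to $F(\cdot\,;r,v)$ on the interval $(0,\,e^{r}-1)$, whose right endpoint is exactly the feasibility threshold: at $z=e^{r}-1$ one has $\ln(1+z)=r$, so the strict inequality $z<e^{r}-1$ is equivalent to $r-\ln(1+z)>0$. Since $F$ is continuous on $(0,\infty)$ (both $\ln(1+z)$ and $\ln(1+2z)$ are smooth and strictly positive there), it suffices to show that $F$ is negative near $z=0^{+}$ and positive at $z=e^{r}-1$; any root produced by the IVT then automatically satisfies the required feasibility inequality.

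First I would analyze the limit as $z\to 0^{+}$. The only delicate point is the product $\left(\tfrac{r}{\ln(1+z)}+1\right)\ln(1+2z)$, in which the first factor blows up while $\ln(1+2z)\to 0$. Writing it as $r\,\tfrac{\ln(1+2z)}{\ln(1+z)}+\ln(1+2z)$ and using $\tfrac{\ln(1+2z)}{\ln(1+z)}\to 2$ (immediate from the expansions $\ln(1+z)=z+O(z^{2})$ and $\ln(1+2z)=2z+O(z^{2})$), the product tends to $2r$. Hence $\lim_{z\to 0^{+}}F(z;r,v)=2r-2r-\ln v=-\ln v$, which is strictly negative because $v>1$. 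This secures the correct sign at the left end.

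Next I would evaluate $F$ at the right endpoint $z=e^{r}-1$. There $\ln(1+z)=r$, so $\tfrac{r}{\ln(1+z)}+1=2$ and $\ln(1+2z)=\ln(2e^{r}-1)$, yielding
\[
F(e^{r}-1;r,v)=2\ln(2e^{r}-1)-2r-\ln v=2\ln\!\left(2-e^{-r}\right)-\ln v=\ln\frac{(2-e^{-r})^{2}}{v}=\ln\frac{R(r)}{v}.
\]
Under the hypothesis $v<R(r)$ this quantity is strictly positive. I would also note that $R(r)=(2-e^{-r})^{2}\in(1,4)$ for every $r>0$, so the admissible window $(1,R(r))$ for $v$ is nonempty and consistent with the standing assumption $v>1$.

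With $F\to-\ln v<0$ as $z\to 0^{+}$ and $F(e^{r}-1;r,v)=\ln\!\big(R(r)/v\big)>0$, continuity and the intermediate value theorem produce a root $z^{*}\in(0,\,e^{r}-1)$. Because $z^{*}<e^{r}-1$ we have $\ln(1+z^{*})<r$, i.e. $r-\ln(1+z^{*})>0$, so the feasibility condition holds, and Theorem~\ref{auxi} then converts $z^{*}$ into a positive equilibrium $(\mu^{*},s^{*})$ via \eqref{F2}. The only genuinely nonroutine step is recognizing that $z=e^{r}-1$ is the correct comparison point: it is simultaneously the boundary of the feasible region and the location where $F$ collapses to $\ln\!\big(R(r)/v\big)$, which is precisely why the sharp threshold $v<(2-e^{-r})^{2}$ emerges. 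Everything else is a continuity-plus-IVT argument together with one short asymptotic computation at the origin.
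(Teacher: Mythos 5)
Your proposal is correct and follows essentially the same route as the paper: both establish $F(0^+)=-\ln v<0$ via the limit $\ln(1+2z)/\ln(1+z)\to 2$, evaluate $F$ at $z=e^{r}-1$ to get $\ln\big(R(r)/v\big)>0$ under the hypothesis $v<R(r)$, and invoke the intermediate value theorem to obtain a root in $(0,e^{r}-1)$, which automatically satisfies the feasibility condition $r>\ln(1+z^{*})$. The only difference is cosmetic: the paper first notes $F(z)\to+\infty$ as $z\to\infty$ to get existence of some root before localizing it to $(0,e^{r}-1)$, whereas you work directly on that interval, which is slightly more economical.
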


\begin{remark}
As we will see in Theorem \ref{rem1}, $v < R(r) := (2 - e^{-r})^2$  is both sufficient and necessary condition for the existence of feasible solution of \eqref{eq:closed-mu-s}.  The condition $v < R(r) := (2 - e^{-r})^2$, as shown in Figure \ref{fig:r1},  characterizes a critical boundary for the existence and stability of the positive equilibrium. Biologically, this inequality reflects the necessary balance between the stabilizing force of the intrinsic growth rate $r$ and the destabilizing potential of environmental variability $v$. Since the threshold function $R(r)$ is monotonically increasing, a higher growth rate enhances the population's resilience, allowing it to absorb larger stochastic fluctuations while maintaining a stable state. Crucially, the upper bound $\lim_{r \to \infty} R(r) = 4$ implies a fundamental theoretical limit: if the noise intensity reaches $v \geq 4$, the environmental perturbations become excessive, rendering the positive equilibrium unstable regardless of the magnitude of the population's intrinsic growth rate.
\end{remark}

\begin{figure}[h!]
           \centering
        \includegraphics[width=0.8\textwidth]{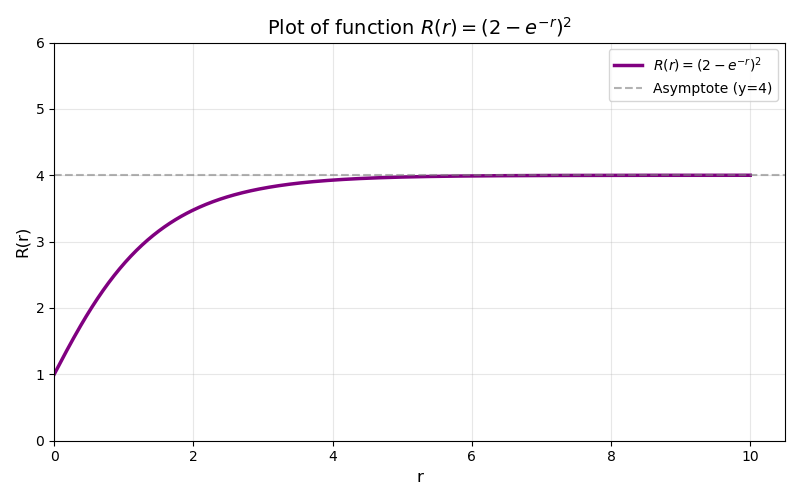}   
    \caption{Graphs of $R(r) = (2 - e^{-r})^2$ }
     \label{fig:r1}
\end{figure}

\begin{figure}[h!]
    \centering
    
    \begin{subfigure}[b]{0.45\textwidth}
        \centering
        \includegraphics[width=\textwidth]{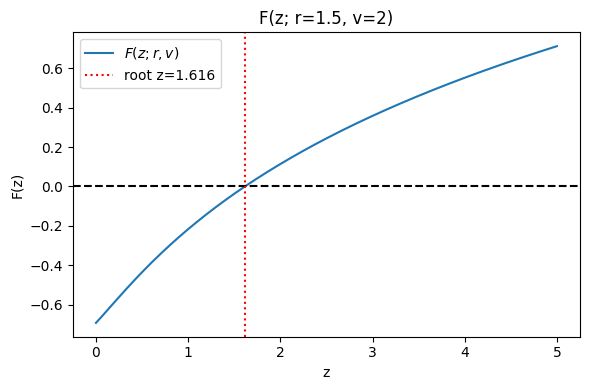}
        \caption{$r=1.5$, $v=2$, $z=1.616$ }
    \end{subfigure}
    \hfill
    \begin{subfigure}[b]{0.45\textwidth}
        \centering
        \includegraphics[width=\textwidth]{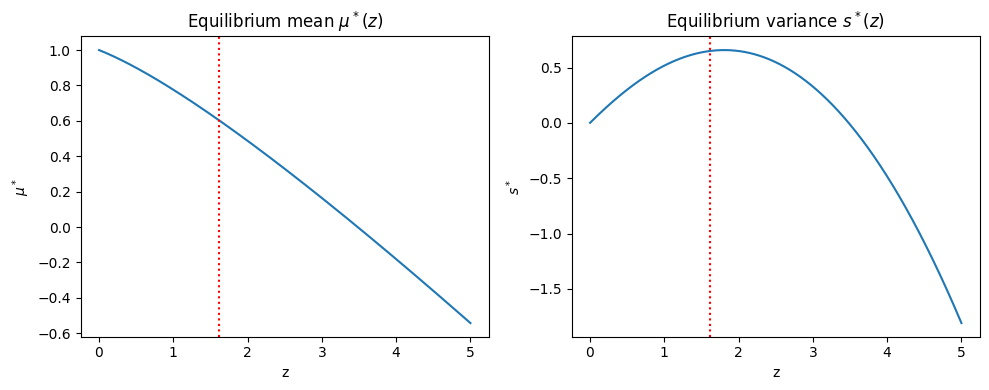}
        \caption{$r=1.5$, $v=2$, $\mu^*=0.602$,$s^*=0.649$ }
   \end{subfigure}
   
   \vspace{1em}  

    \begin{subfigure}[b]{0.45\textwidth}
        \centering
        \includegraphics[width=\textwidth]{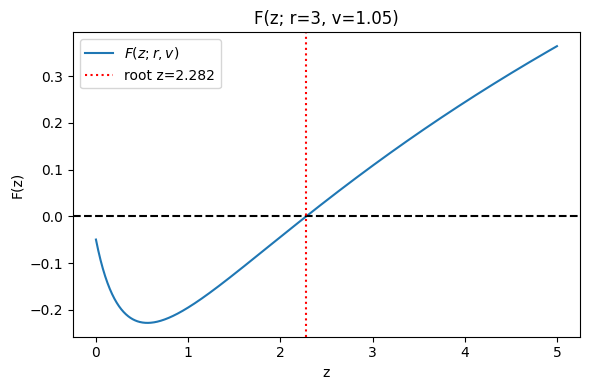}
        \caption{$r=3$, $v=1.05$, $z=2.282$ }
    \end{subfigure}
    \hfill
    \begin{subfigure}[b]{0.45\textwidth}
        \centering
        \includegraphics[width=\textwidth]{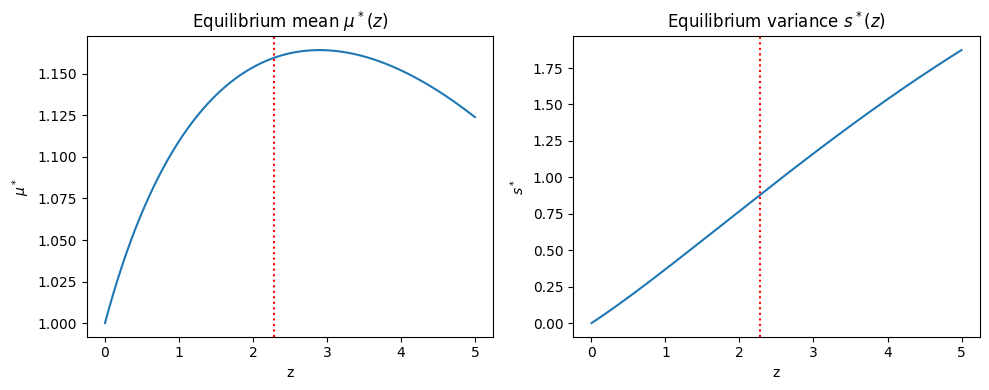}
        \caption{$r=3$, $v=1.05$, $\mu^*=1.159$,$s^*=0.88$ }
    \end{subfigure}

  \caption{Graph of $F(\cdot;r,v)$ and $(\mu^*,s^*)$}
     \label{fig:func1}
\end{figure}
\begin{proof}
First, the function $F$ (we often use $F(z)$ instead of $F(z;r,v)$ for simplicity) is continuous on $(0,\infty)$. As $z >0$ approaches $0$, 
\[
\frac{\ln(1+2z)}{\ln(1+z)}\to 2.
\]
Hence
\[
\lim_{z\downarrow0}\!\Big(\frac{r}{\ln(1+z)}+1\Big)\!\ln(1+2z)
= \lim_{z\downarrow0}\!\frac{r\,\ln(1+2z)}{\ln(1+z)} + \ln(1+2z)
=2r.
\]
Therefore
\[
\lim_{z\downarrow0}F(z;r,v)=-\ln v.\;
\]
Because $v>1$, we have $F(0^+)=-\ln v<0$.

Next, as $z\to\infty$, note that $\frac{r}{\ln(1+z)}\to 0$. Thus
\[
F(z;r,v)\sim \ln(1+2z)-2r-\ln v\to+\infty,\qquad z\to\infty.
\]
By continuity, since $F(0^+)<0$ and $F(z)\to+\infty$, there exists at least one $z^*>0$ such that $F(z^*;r,v)=0$. Hence a positive root exists.

Now we want to show $z^*>0$  results in a feasible solution. Set
\[
z_{\min}:=e^{r}-1>0,
\]
so that \(\ln(1+z_{\min})=r\). Then
\[
\frac{r}{\ln(1+z_{\min})}=1,
\]
and hence the definition of \(F\) yields the exact identity
\begin{equation}
F(z_{\min};r,v)
=2\ln\big(1+2z_{\min}\big)-2r-\ln v
=2\ln\!\big(2e^{r}-1\big)-2r-\ln v.
\label{eq:F_at_zmin}
\end{equation}
Thus
\begin{equation}
F(z_{\min};r,v)>0
\quad\Longleftrightarrow\quad
\ln v \;<\; 2\ln\!\big(2e^{r}-1\big) - 2r
\quad\Longleftrightarrow\quad
v \;<\; \frac{(2e^{r}-1)^2}{e^{2r}} \;=\; (2-e^{-r})^2.
\label{eq:vcondition}
\end{equation}
By assumption \(v<(2-e^{-r})^2\), so \eqref{eq:F_at_zmin} gives 
$$F(z_{\min};r,v)>0$$
By continuity of \(F\) there exists a root in the interval \((0,z_{\min})\). Hence we can choose a solution of $F=0$, $z^*>0$, so that 
$$
0<z^*<z_{\min}=e^{r}-1
$$
therefore,  $$r>\ln(1+z^*)$$ 

Since $z^*>0$ and $r>\ln(1+z^*)$, both $\mu^*>0$ and $s^*>0$. Thus, by Theorem \ref{auxi}, for every $r>0$ and $v>1$, the function $F(\cdot;r,v)$ has at least one positive root $z^*>0$, yielding a positive Gamma--closure equilibrium $(\mu^*,s^*)$: 
\[
\mu^*=\frac{z^*\big(r-\ln(1+z^*)\big)}{r\ln(1+z^*)},\qquad
s^*=\frac{\mu^* z^*}{r}.
\]
Figure \ref{fig:func1} is the graph of $F(\cdot;r,v)$ with one positive root $z^*>0$, yielding a positive Gamma--closure equilibrium $(\mu^*,s^*)$.
\end{proof}

\subsection{Uniqueness of feasible equilibrium}
\begin{theorem}[Uniqueness of Feasible Gamma closure equilibrium]\label{uniqueness} Under the assumptions of Theorem \ref{differeceEq}, 
let
\[
F(z;r,v)\;=\;\Big(\frac{r}{\ln(1+z)}+1\Big)\ln(1+2z)\;-\;2r-\ln v,\qquad z>0,
\]
with parameters $r>0$ and $v>1$. then for every $r>0$ and $v>1$, the scalar equation
\[
F(z;r,v)=0,\qquad z>0,
\]
has only one solution $z^*>0$. Consequently, Gamma moment-closure system \eqref{eq:closed-mu-s} possesses at most one feasible equilibrium.
\end{theorem}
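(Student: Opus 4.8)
The first obstacle to anticipate is that $F$ is \emph{not} monotone on $(0,\infty)$ in general: a short expansion near $z=0$ gives $F'(0^{+})=2-r$, so for $r>2$ the function first decreases below its limiting value $-\ln v$ before eventually climbing to $+\infty$. Hence the existence argument cannot simply be upgraded to uniqueness by asserting $F'>0$ everywhere; instead I would show that, despite this initial dip, $F$ is \emph{unimodal} (decreasing then increasing), so that the level $0$ is crossed exactly once.

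The cleanest way I see to expose this structure is to change variables to $t=\ln(1+z)$, a bijection of $(0,\infty)$ onto itself with $1+2z=2e^{t}-1$, and then clear the denominator by multiplying by $t>0$. Define
\[
\hat F(t):=t\,F=(r+t)\ln\!\bigl(2e^{t}-1\bigr)-(2r+\ln v)\,t .
\]
Since $t>0$, the positive roots of $\hat F$ coincide with those of $F$, so it suffices to count the roots of $\hat F$ on $(0,\infty)$. One checks directly that $\hat F(0)=0$, that $\hat F'(0)=-\ln v<0$, and that $\hat F(t)\sim t^{2}\to+\infty$ as $t\to\infty$; thus $\hat F$ leaves the origin strictly decreasing and ultimately grows without bound.

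The key step is to control the convexity of $\hat F$. Writing $\ell(t)=\ln(2e^{t}-1)$, so that $\ell''=-2e^{t}/(2e^{t}-1)^{2}$, and differentiating $\hat F=(r+t)\ell-(2r+\ln v)t$ twice, I would obtain
\[
\hat F''(t)=\frac{2e^{t}}{(2e^{t}-1)^{2}}\,\bigl(4e^{t}-t-2-r\bigr).
\]
The prefactor is positive, so the sign of $\hat F''$ is governed by $N(t):=4e^{t}-t-2-r$; because $N'(t)=4e^{t}-1>0$, the function $N$ is strictly increasing and changes sign \emph{at most once}. Consequently $\hat F$ is either globally convex (when $r\le 2$, where $N(0)=2-r\ge 0$) or concave on an initial interval $(0,t_{0})$ and convex on $(t_{0},\infty)$ (when $r>2$).

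Finally I would close the argument from this shape. On the concave piece $\hat F'$ is decreasing from $\hat F'(0)=-\ln v<0$, hence $\hat F'<0$ and $\hat F$ stays strictly below $\hat F(0)=0$ there, ruling out roots in $(0,t_{0}]$. On the convex piece the sublevel set $\{\hat F\le 0\}$ is an interval; it contains points just to the right of $t_{0}$ (since $\hat F(t_{0})<0$) and is bounded (since $\hat F\to+\infty$), so $\hat F$ vanishes at exactly one point $t^{*}>t_{0}$ and is positive beyond it. This yields a unique positive root of $\hat F$, hence of $F$, and the feasibility $r-\ln(1+z^{*})>0$ then follows as in Theorem~\ref{existence}. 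I expect the main technical obstacle to be the clean reduction of $\hat F''$ to the single factor $N(t)=4e^{t}-t-2-r$ and the careful root count on the convex branch; once the concave--convex dichotomy is in hand, the remaining steps are elementary.
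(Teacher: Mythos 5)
Your argument is correct, and it takes a genuinely different and substantially more elementary route than the paper. The paper works with $\Phi(z)=\ln(1+2z)/\ln(1+z)$ and reduces the sign of $F''$ to that of $H(z)=(1+2z)^2\Phi''(z)-4/r$; the monotonicity of $H$ (Lemma in the Appendix) is the technical heart of that proof, and it is established there only through a lengthy closed form for $H'$ whose key positivity claim for the numerator $Q(z)$ is verified by symbolic computation and plots rather than by hand. Your substitution $t=\ln(1+z)$ followed by multiplication by $t$ dissolves exactly this difficulty: the second derivative of $\hat F(t)=(r+t)\ln(2e^{t}-1)-(2r+\ln v)t$ factors as $\tfrac{2e^{t}}{(2e^{t}-1)^{2}}\,(4e^{t}-t-2-r)$ (I verified this computation), and the single sign change of $N(t)=4e^{t}-t-2-r$ follows from the one-line fact $N'(t)=4e^{t}-1>0$. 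From there your concave--convex dichotomy and the convex-sublevel-set root count are sound: on the concave piece $\hat F'$ decreases from $\hat F'(0)=-\ln v<0$, so no roots occur there, and strict convexity forces $\hat F<0$ strictly between $t_{0}$ and the unique zero $t^{*}$. The only point worth making explicit is the $r\le 2$ case, where the concave piece is empty and the convex branch starts at $t=0$ with $\hat F(0)=0$ rather than $\hat F<0$; there the same conclusion follows because $\hat F'(0)=-\ln v<0$ pushes $\hat F$ negative immediately and strict convexity with $\hat F\to+\infty$ again yields exactly one positive zero. Your approach replaces the paper's computer-assisted lemma with a fully self-contained elementary argument, which is a clear gain; the paper's formulation in the original variable $z$ has the minor advantage that the quantities $\Phi$, $H$, and $F'(z_{\min})$ are reused verbatim in the proof of the necessity direction (Theorem~\ref{rem1}).
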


\begin{proof} Note that $v \;<\; \big(2-e^{-r}\big)^2$ is not assumed in Theorem \ref{uniqueness} for uniqueness. Lemma \ref{Phi} in Appendix is essential for the proof of Theorem \ref{uniqueness}. The proof of Lemma \ref{Phi} is lengthy and organized in Appendix. Let the function $\Phi(z)$ be defined for $z>0$ as:
\[
\Phi(z):=\frac{\ln(1+2z)}{\ln(1+z)}.
\]
so that
\[
F(z;r,v)=r\big(\Phi(z)-2\big)+\ln(1+2z)-\ln v.
\]
Differentiating $F$ (we often use $F(z)$ instead of $F(z;r,v)$ for simplicity) with respect to $z$ yields the first and second derivatives:
\[
F'(z)=r\,\Phi'(z)+\frac{2}{1+2z}, \qquad F''(z)=r\,\Phi''(z)-\frac{4}{(1+2z)^2}.
\]
Now set $$H(z) \equiv \Phi''(z)(1+2z)^2$$
we can factor $F''(z)$ as follows:
\[
F''(z) = \frac{r}{(1+2z)^2} \left( H(z) - \frac{4}{r} \right).
\]
Since the pre-factor $\frac{r}{(1+2z)^2}$ is strictly positive for all $z$, the sign of $F''(z)$ is determined solely by the sign of the term $(H(z) - 4/r)$.

\begin{figure}[h!]
           \centering
        \includegraphics[width=0.8\textwidth]{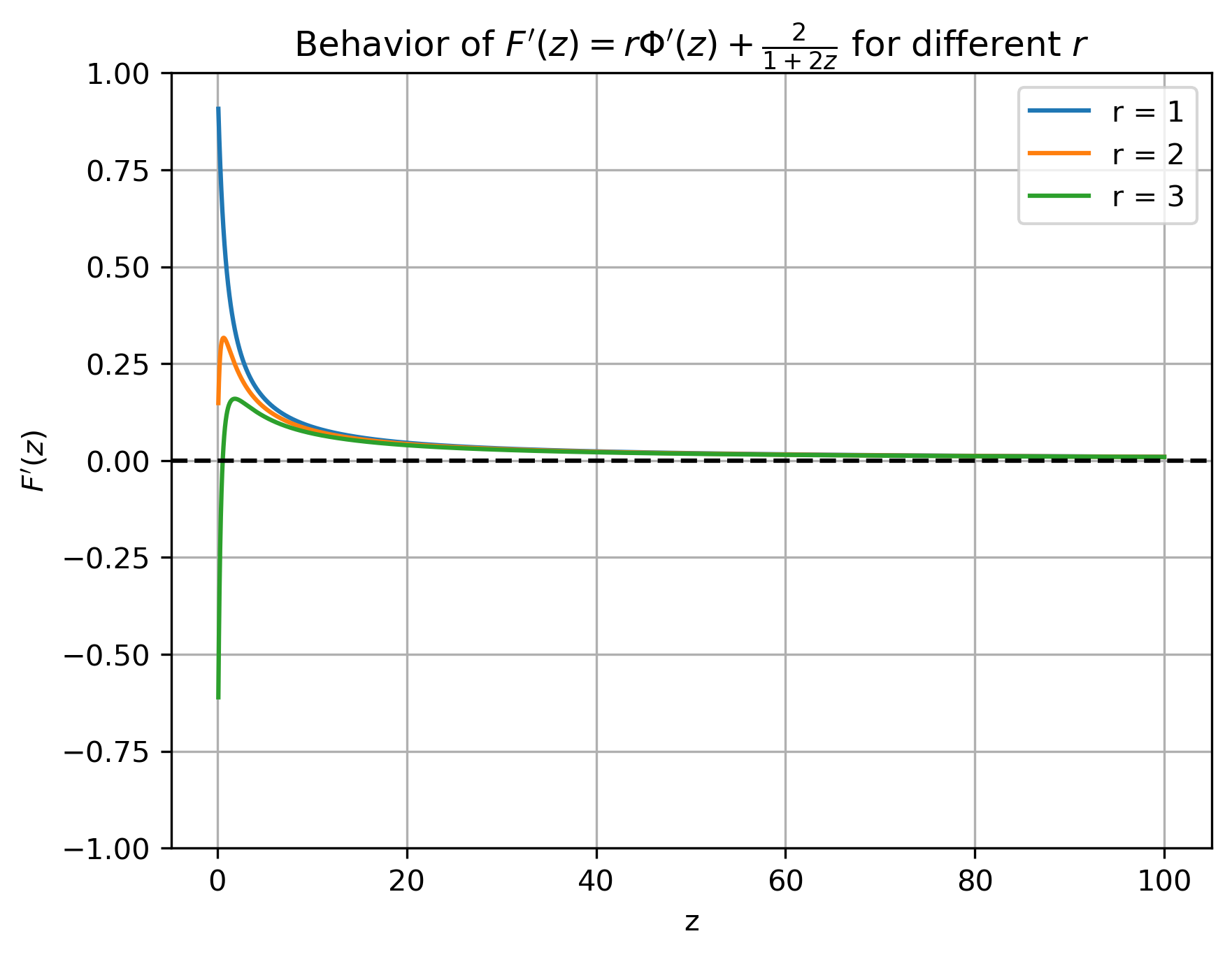}   
    \caption{Graphs of $F'(z)=r\Phi'(z)+\frac{2}{1+2z}$}
     \label{fig:F1122}
\end{figure}
In view of Lemma \ref{Phi} and as depicted in Figures  \ref{fig:phi} and \ref{fig:H}, $H(z)$ is a strictly decreasing function on $(0, \infty)$ starting from $H(0)=3$ and approaching 0 as $z \to \infty$. This monotonicity dictates the sign of $F''(z)$ through two distinct cases:
\begin{itemize}
    \item Case $r \le 4/3$ (Concave down): Since $H(z) < 3 \le 4/r$, the term $(H(z) - 4/r)$ is always negative. Thus, $F''(z) < 0$ for all $z$.
    \item Case $r > 4/3$ (Convex then Concave down): Since $H(z)$ starts at 3 ($>4/r$) and decreases to 0, it intersects the value $4/r$ exactly once at a point $z_{\text{infl}}$. Consequently, $F''(z) > 0$ for $z < z_{\text{infl}}$ (convex) and $F''(z) < 0$ for $z > z_{\text{infl}}$ (concave down).
\end{itemize}

Next, we use this concavity behavior to determine the monotonicity of $F(z)$ by analyzing $F'(z)$ in the two following two cases. First, Figure \ref{fig:F1122} demonstrates that the graph of $F'$. From Lemma \ref{Phi}, it follows that 
\[
F'(0^+)=2-r, \,\,  \lim_{z \to \infty} F'(z) = 0.
\]
\textbf{Case 1: $0 < r \le 2$}. \\
In this case, $F'(0^+) = 2 - r \ge 0$. If $r \le 4/3$, $F(z)$ is strictly concave down ($F''<0$), implying $F'(z)$ strictly positive for all $z>0$ because $lim_{z \to \infty} F'(z) = 0.$  If $4/3 < r \le 2$, because of $F''(z) > 0$ for $z < z_{\text{infl}}$, $F'(z)$ is strictly increasing on $(0, z_{\text{infl}}]$ starting from a non-negative value $F'(0^+) \ge 0$. It follows that $F'(z)$ is strictly positive on $(0, z_{\text{infl}}]$. On the interval $(z_{\text{infl}}, \infty)$, $F'(z)$ is strictly increasing too. Suppose, for the sake of contradiction, that $F'(z)$ becomes non-positive at some point $z_0 > z_{\text{infl}}$ (i.e., $F'(z_0) \le 0$). Since $F''(z)<0$ in this region, for all $z > z_0$, we would have $F'(z) < F'(z_0) \le 0$. This would imply:
\[
\lim_{z \to \infty} F'(z) \le F'(z_0) < 0.
\]
However, this contradicts the established limit $\lim_{z \to \infty} F'(z) = 0$.

Combining this with the positive increasing phase, we conclude that $F'(z) > 0$ for all $z > 0$. Consequently, $F(z)$ is strictly increasing on $(0, \infty)$.

\textbf{Case 2: $r > 2$.} \\
The initial slope is negative ($F'(0^+) < 0$) and $F(z)$ must eventually grow to $+\infty$. Therefore, the derivative $F'(z)$ cannot remain negative; it must cross zero to become positive. Assume that $\hat{z}>0$ is the smallest $z$ such that $F'(\hat{z})=0.$  We claim that $F'(z)$ remains strictly positive for all $z > \hat{z}$. Suppose, for the sake of contradiction, that this is not true. This would imply there exists some $z_1 > \hat{z}$ such that $F'(z_1) < 0$. This would imply that $F''(z) = 0$ has at least \textit{two} distinct solutions because $F(z)$ must eventually grow to $+\infty$. However, we have proven that $H(z)=0$, ( therefore $F''(z) = 0$), has at most one solution. This contradiction implies that $F'(z)$ cannot cross back to negative values. Thus, $F'(z) > 0$ for all $z > \hat{z}$.

Finally, we prove the uniqueness of the solution $z^*$.
For $r \le 2$, $F(z)$ is strictly increasing from $F(0^+)$ to $+\infty$ and the function must cross zero exactly once.
For $r > 2$, $F(z)$ decreases on $(0, \hat{z}]$ to a negative minimum $F(\hat{z}) < F(0^+)$, so there are no solutions in this interval. On the interval $(\hat{z}, \infty)$, $F(z)$ is strictly increasing from a negative value to $+\infty$. Thus, it intersects the $z$-axis exactly once at a point $z^* > \hat{z}$.

In all cases, the equation $F(z;r,v)=0$ possesses exactly one positive solution $z^*$.
\end{proof}

\subsection{Sufficient and necessary condition for existence}

\begin{theorem}\label{rem1} Under the assumptions of Theorem \ref{differeceEq}, let
\[
F(z;r,v)\;=\;\Big(\frac{r}{\ln(1+z)}+1\Big)\ln(1+2z)\;-\;2r-\ln v,\qquad z>0,
\]
with parameters $r>0$ and $v>1$.  The condition $$v \;<\; \big(2-e^{-r}\big)^2$$ is both sufficient and necessary for the existence of feasible solution of \eqref{eq:closed-mu-s}.
\end{theorem}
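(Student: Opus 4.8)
The plan is to combine three earlier results: the existence theorem (Theorem \ref{existence}), the uniqueness theorem (Theorem \ref{uniqueness}), and the exact sign computation already carried out inside the proof of Theorem \ref{existence}, namely that $F(z_{\min};r,v)>0$ if and only if $v<(2-e^{-r})^2$, where $z_{\min}:=e^r-1$. Sufficiency is immediate from Theorem \ref{existence}, so the real content is the necessity direction, which I would prove in contrapositive form: if $v\ge(2-e^{-r})^2$, then the unique equilibrium fails to be feasible.

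First I would recall from Theorem \ref{auxi} that, under the substitution $z=r\theta=rs/\mu$, positive equilibria $(\mu,s)$ correspond exactly to positive roots $z$ of $F$, with $\mu=z(r-\ln(1+z))/(r\ln(1+z))$ and $s=\mu z/r$. Hence feasibility ($\mu>0$, which then forces $s>0$) is equivalent to $r-\ln(1+z)>0$, i.e. $z<z_{\min}=e^r-1$. Since Theorem \ref{uniqueness} guarantees that $F$ has exactly one positive root $z^*$ for every $r>0$ and $v>1$, a feasible equilibrium exists if and only if this single root satisfies $z^*<z_{\min}$.

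Next I would locate $z^*$ relative to $z_{\min}$ using only the sign of $F(z_{\min})$. Because $F(0^+)=-\ln v<0$ and $F(z)\to+\infty$ as $z\to\infty$, and because $z^*$ is the unique root, $F$ must change sign exactly once, so that $F<0$ on $(0,z^*)$ and $F>0$ on $(z^*,\infty)$; this is precisely the sign pattern produced by the monotonicity analysis in the proof of Theorem \ref{uniqueness} in both the $r\le2$ and $r>2$ cases. Consequently $z^*<z_{\min}$ if and only if $F(z_{\min})>0$. Chaining this with the equivalence $F(z_{\min};r,v)>0 \Leftrightarrow v<(2-e^{-r})^2$ from \eqref{eq:vcondition}, I conclude that a feasible equilibrium exists if and only if $z^*<z_{\min}$, if and only if $F(z_{\min})>0$, if and only if $v<(2-e^{-r})^2$, which establishes both directions at once. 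In particular, when $v\ge(2-e^{-r})^2$ the unique root satisfies $z^*\ge z_{\min}$, forcing $r\le\ln(1+z^*)$ and hence $\mu^*\le0$, so no feasible positive equilibrium can exist.

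The step requiring the most care is the boundary case $v=(2-e^{-r})^2$, which gives $F(z_{\min})=0$, hence $z^*=z_{\min}$ and $\mu^*=0$. This equilibrium sits exactly on the feasibility boundary and is excluded by the strict requirement $\mu>0$, which is exactly why the threshold must be the strict inequality $v<(2-e^{-r})^2$ rather than $v\le(2-e^{-r})^2$. I would state this knife-edge explicitly so that the necessity claim is sharp and the degenerate $v=R(r)$ case is handled unambiguously.
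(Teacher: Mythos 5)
Your proof is correct, and while it follows the same overall strategy as the paper (sufficiency from Theorem \ref{existence}; necessity in contrapositive form by showing that $F(z_{\min};r,v)\le 0$ forces the unique root $z^*$ to satisfy $z^*\ge z_{\min}=e^r-1$, hence $\mu^*\le 0$), the way you execute the key step is genuinely simpler. The paper splits into the cases $0<r\le 2$ and $r>2$: in the first it invokes strict monotonicity of $F$, and in the second it carries out an explicit computation showing $F'(z_{\min})>0$ (via the inequality $2\ln(1+z)>\ln(1+2z)$) in order to place $z_{\min}$ in the increasing region of $F$ before comparing $F(z_{\min})$ with $F(z_0)=0$. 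You bypass both the case split and the derivative evaluation by observing that uniqueness of the root (Theorem \ref{uniqueness}) together with $F(0^+)=-\ln v<0$ and $F(z)\to+\infty$ forces, by the intermediate value theorem, the sign pattern $F<0$ on $(0,z^*)$ and $F>0$ on $(z^*,\infty)$, so that $z^*<z_{\min}$ if and only if $F(z_{\min})>0$; chaining with \eqref{eq:vcondition} then gives both directions at once. This buys a shorter and more transparent argument, at the cost of leaning entirely on Theorem \ref{uniqueness} (which the paper also uses, but whose full strength it does not exploit at this point); the paper's computation of $F'(z_{\min})>0$ is, strictly speaking, redundant even within its own framework, since $z_0>\hat z$ already holds in Case 2, so either $z_{\min}\le\hat z<z_0$ or monotonicity on $(\hat z,\infty)$ applies. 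Your explicit treatment of the knife-edge case $v=(2-e^{-r})^2$, where $z^*=z_{\min}$ and $\mu^*=0$, is a welcome clarification that the paper only handles implicitly through the non-strict inequality $F(z_{\min})\le 0$.
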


\begin{proof}
Note that $v \;<\; \big(2-e^{-r}\big)^2$ is not assumed in Theorem \ref{uniqueness} for uniqueness. The sufficient part of Theorem \ref{rem1} is the result of Theorems \ref{existence}. We now need to prove that if $v \; \geq \; \big(2-e^{-r}\big)^2$, the unique positive solution $z_0>0$ of $F(z;r,v)=0$ does not satisfy the feasible condition 
$$
r>\ln(1+z_0)
$$   
In the proof of Theorem \ref{existence}, in view of \eqref{eq:vcondition},  
if $v \; \geq \; \big(2-e^{-r}\big)^2$, then 
$$ 
F(z_{\min};r,v) \leq 0
$$
where  $z_{\min}=e^r-1$. We will proceed the proof in the two cases: \\

\textbf{Case 1: $0 < r \le 2$}. As in the proof of Theorem \ref{uniqueness}, $F'(z) > 0$ for all $z > 0$. Consequently, $F(z)$ is strictly increasing on $(0, \infty)$. Because $F(z_{\min}) \leq  0=F(z_0)$, it follows that 
$$
e^r-1=z_{\min} \leq  z_0. 
$$
which is 
$$
r \leq \ln(1+z_0)
$$
As a result, $z_0$ is not a feasible equilibrium. \\
\textbf{Case 2: $r > 2$.} As in the proof of Theorem \ref{uniqueness}, assume that $\hat{z}>0$ is the smallest $z$ such that $F'(\hat{z})=0.$ We prove that $F(z)$ decreases on $(0, \hat{z}]$ and strictly increases on the interval $(\hat{z}, \infty)$. If we can show that
$$
\hat{z} < z_{min}
$$
because of the monotonicity of $F$ on $(\hat{z}, \infty)$,  as in Case 1, we have $e^r-1=z_{\min} \leq z_0$ and $r \leq \ln(1+z_0)$ and $z_0$ is not a feasible equilibrium.

Now we only need to verify that $F'(z_{min})>0$, which implies that $\hat{z} < z_{min}$ because the definition of $\hat{z}$. Recall in Theorem \ref{uniqueness}
\begin{equation}
    F'(z) = r \Phi'(z) + \frac{2}{1+2z}
\end{equation}
where $\Phi'(z) = \frac{u'v - uv'}{v^2}$, with $u = \ln(1+2z)$ and $v = \ln(1+z)$. We claim that $F'(z_{min})$ is strictly positive for all $r > 0$. Let $z=z_{min}= e^r - 1$,  therefore $r = \ln(1+z)$ and evaluate $F'(z)$ at $z=z_{min} = e^r - 1$,
\begin{align}
    F'(z) &= r \left[ \frac{r u' - u v'}{r^2} \right] + \frac{2}{1+2z} \\
          &= \frac{r u' - u v'}{r} + \frac{2}{1+2z} \\
          &= u' - \frac{u v'}{r} + \frac{2}{1+2z}
\end{align}
Substituting the derivatives $u' = \frac{2}{1+2z}$ and $v' = \frac{1}{1+z}$, again noting $r = \ln(1+z)$:
\begin{align}
    F'(z) &= \frac{2}{1+2z} - \frac{\ln(1+2z)}{r(1+z)} + \frac{2}{1+2z} \\
          &= \frac{4}{1+2z} - \frac{\ln(1+2z)}{(1+z)\ln(1+z)}\\
          &= \frac{1}{1+z} [\frac{4(1+z)}{1+2z} - \frac{\ln(1+2z)}{\ln(1+z)}]\\
          &= \frac{1}{1+z} [\frac{2}{1+2z} + (2 - \frac{\ln(1+2z)}{\ln(1+z)})]          
\end{align}
Since $(1+z)^2 > 1 + 2z$ for all $z>0$, taking the natural logarithm of both sides:
\begin{equation}
 2\ln(1+z) > \ln(1+2z)
\end{equation}
It follows that  $F'(z_{min})=F'(e^r - 1)=F'(z)>0$.  In summary, we conclude that if $v \; \geq \; \big(2-e^{-r}\big)^2$, the unique positive solution $z_0>0$ of $F(z;r,v)=0$ does not satisfy the feasible condition $r >\ln(1+z_0)$.  
\end{proof}
We employed numerical methods to verify the existence and uniqueness of the feasible solution for $v < R(r)=(2 - e^{-r})^2$ and to confirm the non-existence of feasible solutions when $v > (2 - e^{-r})^2$. We performed a high-resolution parameter scan over $r \in [0.5, 10]$ and $v \in [1.05, 4.5]$, computing roots of the auxiliary function $F(z)$ via a hybrid grid-search and Brent's method 
\cite{brent1973algorithms}. As shown in Figure \ref{map_1_existence}, the results numerically confirm that solutions are unique within the existence region (blue), as no multiple roots were detected. Furthermore, the solver found no feasible solutions in the region where $v > (2 - e^{-r})^2$ (gray), numerically validating the theoretical boundary for extinction. The simulation is performed with a search limit of $z_{\text{max}} = 30,000$. Since we restrict $r$ to the interval $[0, 10]$, from the proof Theorem \ref{rem1} that, any $z$ larger than $ e^r - 1$ can not be a feasible solution, which ensures that the solution never exceeds $e^{10} - 1 \approx 22,025$.  Because the search space is strictly larger than this theoretical upper bound, the simulation is valid and guaranteed to capture the solutions for the entire range of $r$. We numerically verified that the analytic curve \(v=R(r)\) separates the parameter region: 
\[
\begin{cases}
v < R(r) &\text{the recovered equilibrium satisfies }\mu^*>0,\ s^*>0,\\[4pt]
v > R(r) &\text{the recovered equilibrium satisfies } \mu^*\le 0\ \text{or}\ s^*\le 0.
\end{cases}
\]

\begin{figure}[h!]
    \centering
        \includegraphics[width=0.8\textwidth]{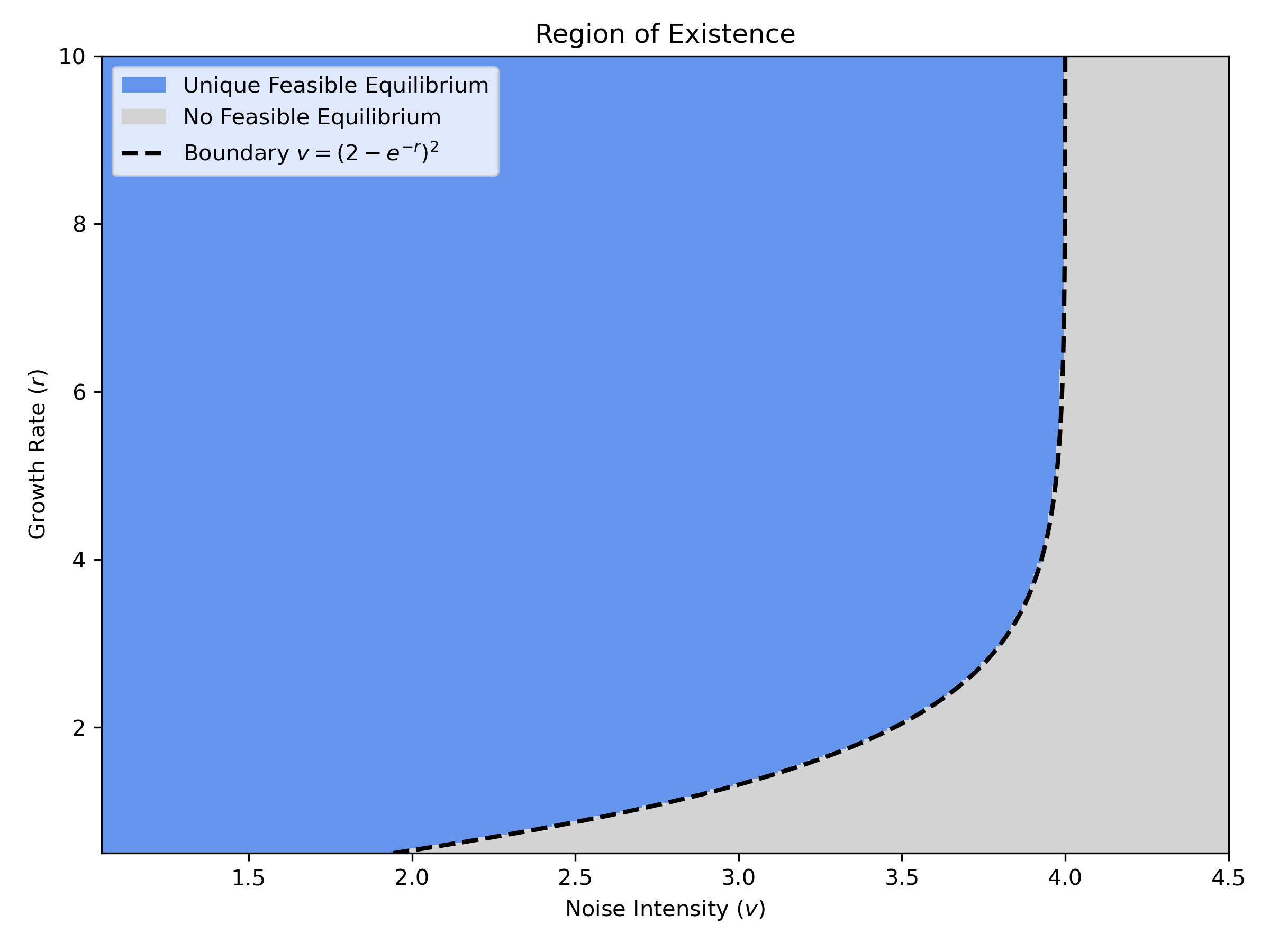}
        \caption{Existence and uniqueness region}
        \label{map_1_existence}
\end{figure}

\section{Stability Region of Equilibria}\label{stables}

\subsection{Equivalence Between Feasibility and Stability}

In view of Theorem \ref{rem1}, the feasible equilibrium of \eqref{eq:closed-mu-s} exists and is unique.  The local stability of each equilibrium is determined by the eigenvalues of the Jacobian matrix 
evaluated at that equilibrium point. If all eigenvalues lie inside the unit circle, 
the equilibrium is locally asymptotically stable. Specifically, we use the sufficient and necessary condition for local stability, also called the Schur inequalities, 
\[
1-\operatorname{tr}J + \det J>0,\qquad 1+\operatorname{tr}J + \det J>0,\qquad 1-\det J>0.
\]
where $J$ is the Jacobian of \eqref{eq:closed-mu-s} \cite{elaydi2005}. 
\begin{figure}[h!]
    \centering

        \centering
        \includegraphics[width=0.8\textwidth]{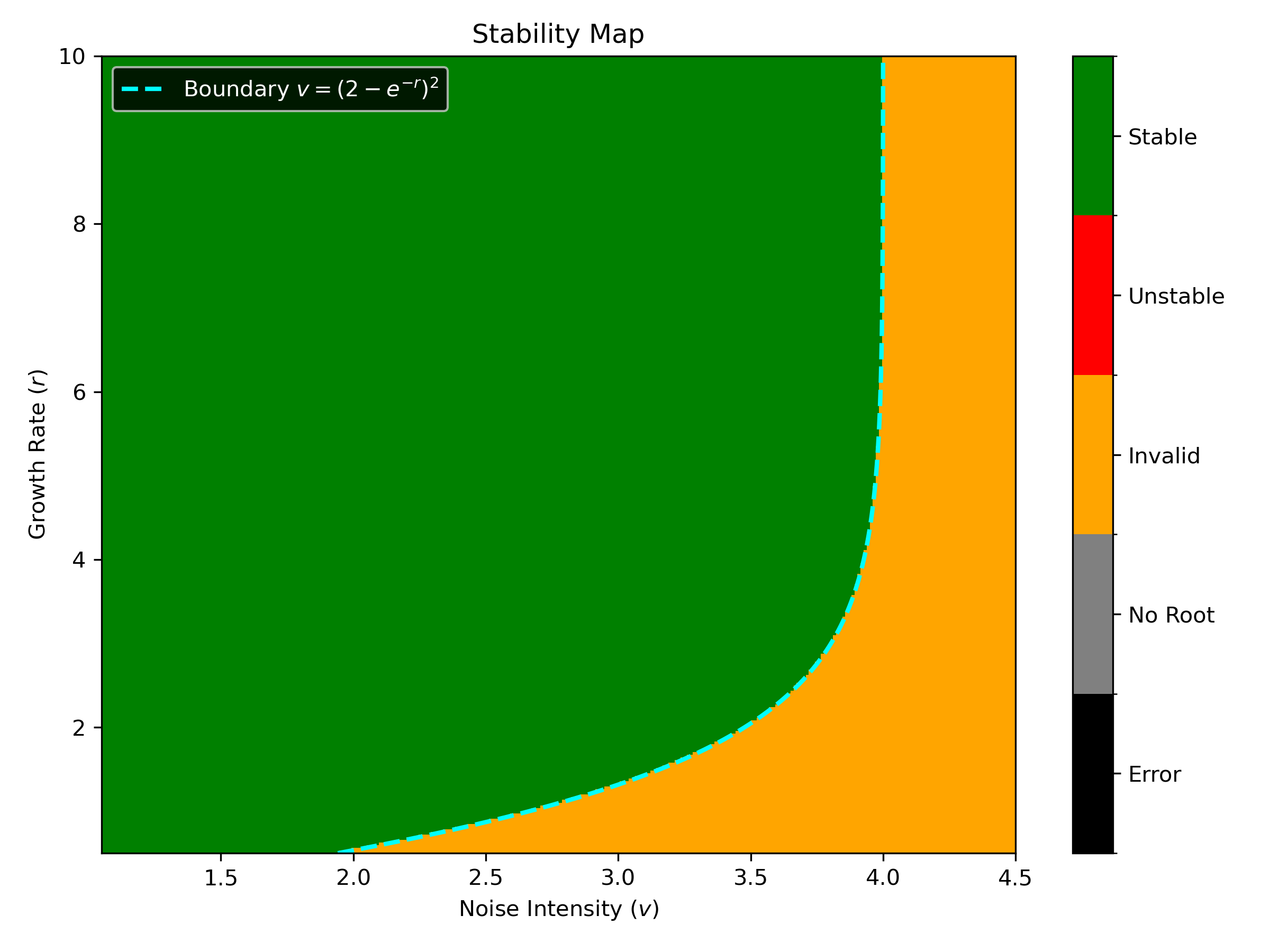}
        \caption{Stability Region}
        \label{map_2_stability}
     \label{fig:func212}
\end{figure}

While it is challenging to derive the closed form of the stability condition. In this section, we numerically demonstrate all the stability region of equilibria is the same as the existence region of equilibria in $(v,r)$. The numerical stability analysis was performed using a high-resolution grid search to map the system's dynamic regimes over the parameter space $r \in [0, 10]$ and $v \in [1.05, 4.5]$. The core procedure involved determining the equilibrium solution $z^*$ at each grid point and subsequently evaluating the local stability criterion. To ensure the validity of the simulation, the search range for $z^*$ was extended to $z_{\text{max}} = 30,000$. Since the theoretical bound $z < e^r - 1$ implies a maximum possible root of $e^{10} - 1 \approx 22,025$, this search limit strictly exceeds the theoretical upper bound, guaranteeing that the stability map captures the complete solution space without truncation artifacts. Figure \ref{map_2_stability} shows the stability classification of the Gamma--closure moment map in the $(r,v)$ parameter plane, 
The map was computed by solving the equilibrium condition $F(z;r,v)=0$ for $z=s/\mu$ and reconstructing the corresponding 
moment pair $(\mu^*,s^*)$; the Jacobian of the moment map was then evaluated to determine linear stability. 

A careful comparison of the resulting figures \ref{map_1_existence} and \ref{map_2_stability} reveal that the region of existence and the region of stability are identical. In the existence map, the blue region representing feasible equilibria ($\mu^* > 0$) is bounded sharply by the theoretical curve $v = (2 - e^{-r})^2$ in Figure \ref{map_1_existence}. Similarly, in the stability map in Figure \ref{map_2_stability}, the green region representing stable equilibria is confined by the exact same boundary. The area to the right of this boundary in the stability map is composed of ``Invalid'' points (where a mathematical root exists but yields unfeasible parameters such as $\mu^*\le 0\ \text{or}\ s^*\le 0.$). This precise overlap confirms that for this Gamma moment closure system, the threshold for the emergence of a positive population equilibrium coincides with the threshold for its local linear stability; effectively, wherever a biological equilibrium exists, it is stable.

\subsection{Monte--Carlo Simulation}
To evaluate the stability of the stochastic Ricker system, we performed a series of Monte--Carlo (MC) and moment--map simulations over an increasing sequence of growth rates \(r\in[0.5,3.0]\) with multiplicative noise intensity \(v \in [1, 3.0]\). In parallel, the deterministic moment equations 
\eqref{eq:closed-mu-s}
derived from the Gamma moment equations were iterated from the same initial moments.

Each Monte--Carlo experiment simulates an ensemble of independent stochastic Ricker trajectories,
\[
X_{n+1} = X_n\, e^{r(1 - X_n)}\,\varepsilon_n, \qquad t=0,1,\dots,T_{\max}-1,
\]
where \(r>0\) is the intrinsic growth rate and \(\varepsilon_n>0\) represents
multiplicative environmental noise.
The random factors \(\varepsilon_n\) are drawn from a \emph{lognormal} distribution
chosen so that the first two moments satisfy
\[
\mathbb{E}[\varepsilon_n]=1, \qquad \mathbb{E}[\varepsilon_n^2]=v>1.
\]
This ensures that the mean dynamics coincide with the deterministic Ricker map
when \(v=1\), while \(v>1\) introduces stochastic amplification of population fluctuations.
Specifically, we set
\(\varepsilon_n = \exp(\sigma Z_n - \tfrac{1}{2}\sigma^2)\)
with \(Z_n\sim \mathcal{N}(0,1)\) and \(\sigma^2 = \ln v\),
so that the multiplicative noise amplitude is fully determined by \(v\).

\begin{figure}[h!]
    \centering
    
    \begin{subfigure}[b]{0.45\textwidth}
        \centering
        \includegraphics[width=\textwidth]{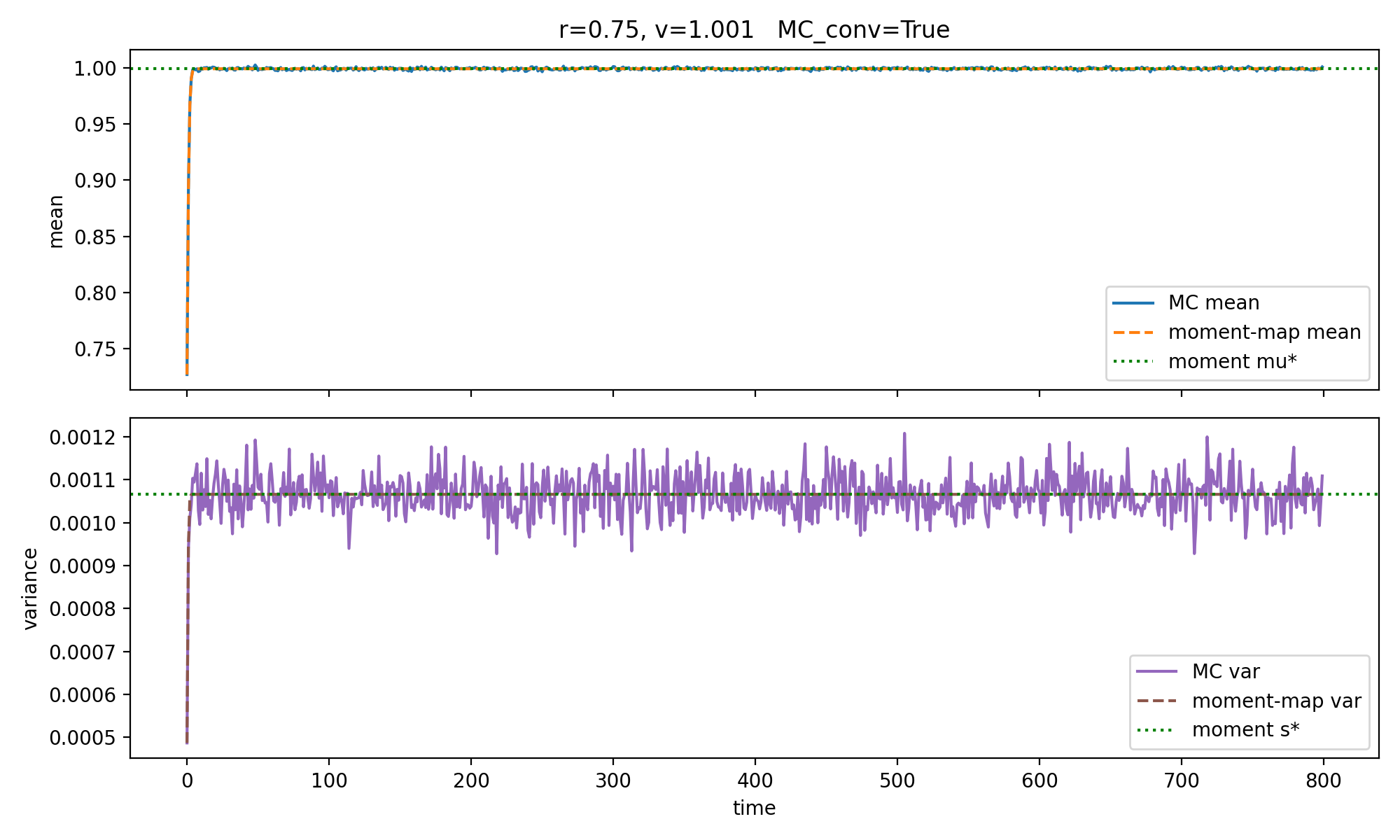}
        \caption{$r=0.75$, $v=1.001$}
        \label{fig:F1}
    \end{subfigure}
    \hfill
    \begin{subfigure}[b]{0.45\textwidth}
        \centering
        \includegraphics[width=\textwidth]{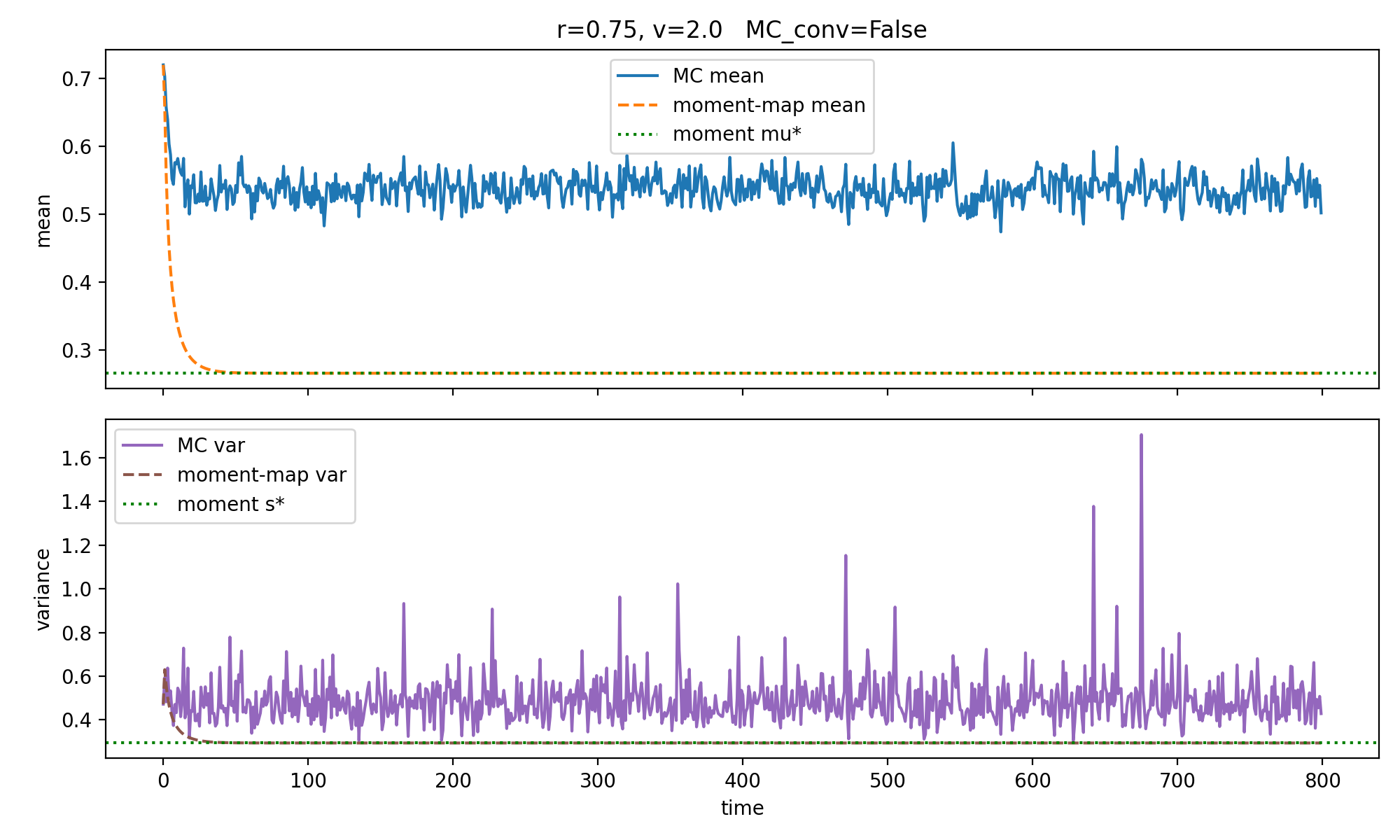}
        \caption{$r=0.75$, $v=2.0$}
        \label{fig:F2}
    \end{subfigure}
    
    \vspace{1em}  

    \begin{subfigure}[b]{0.45\textwidth}
        \centering
        \includegraphics[width=\textwidth]{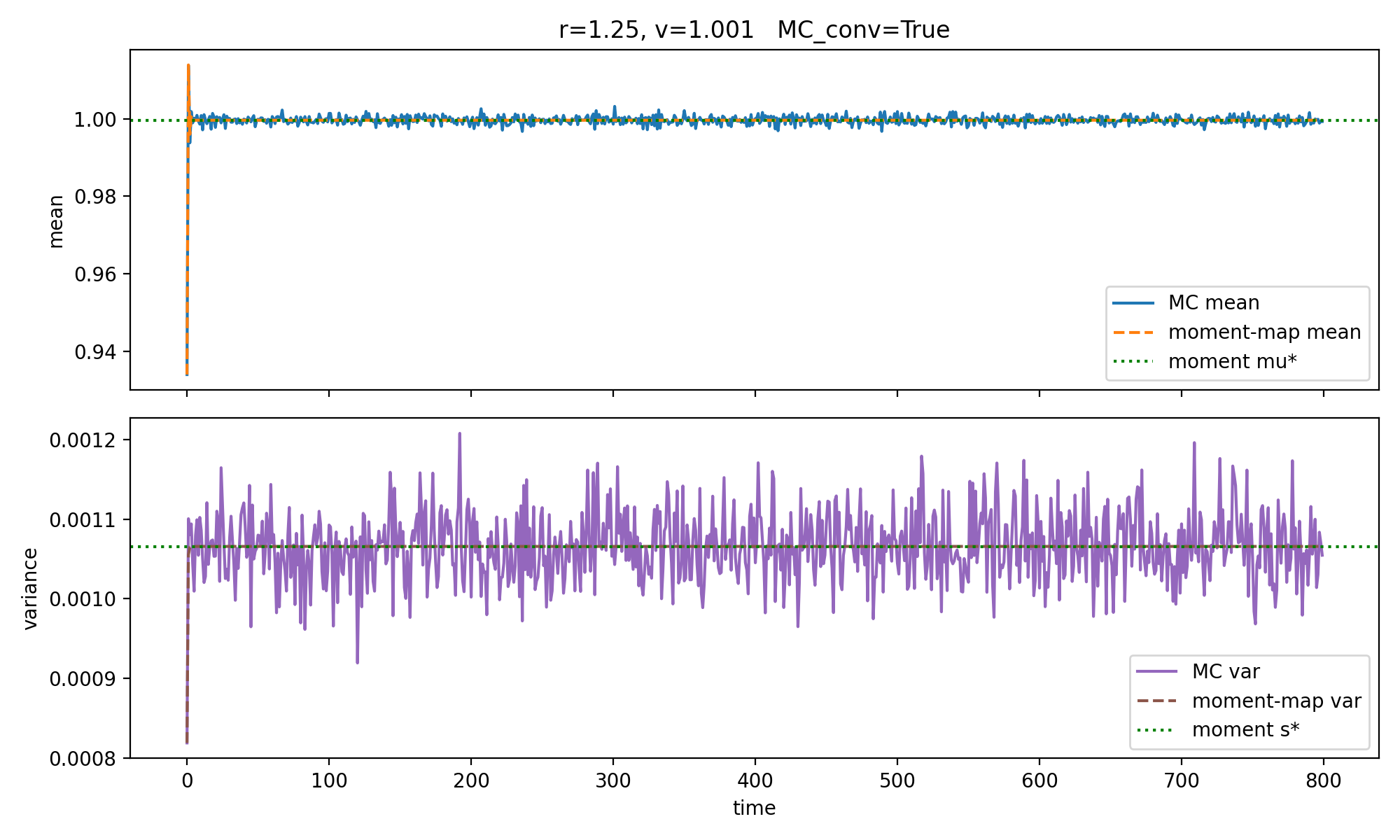}
        \caption{$r=1.25$, $v=1.001$}
        \label{fig:F3}
    \end{subfigure}
    \hfill
    \begin{subfigure}[b]{0.45\textwidth}
        \centering
        \includegraphics[width=\textwidth]{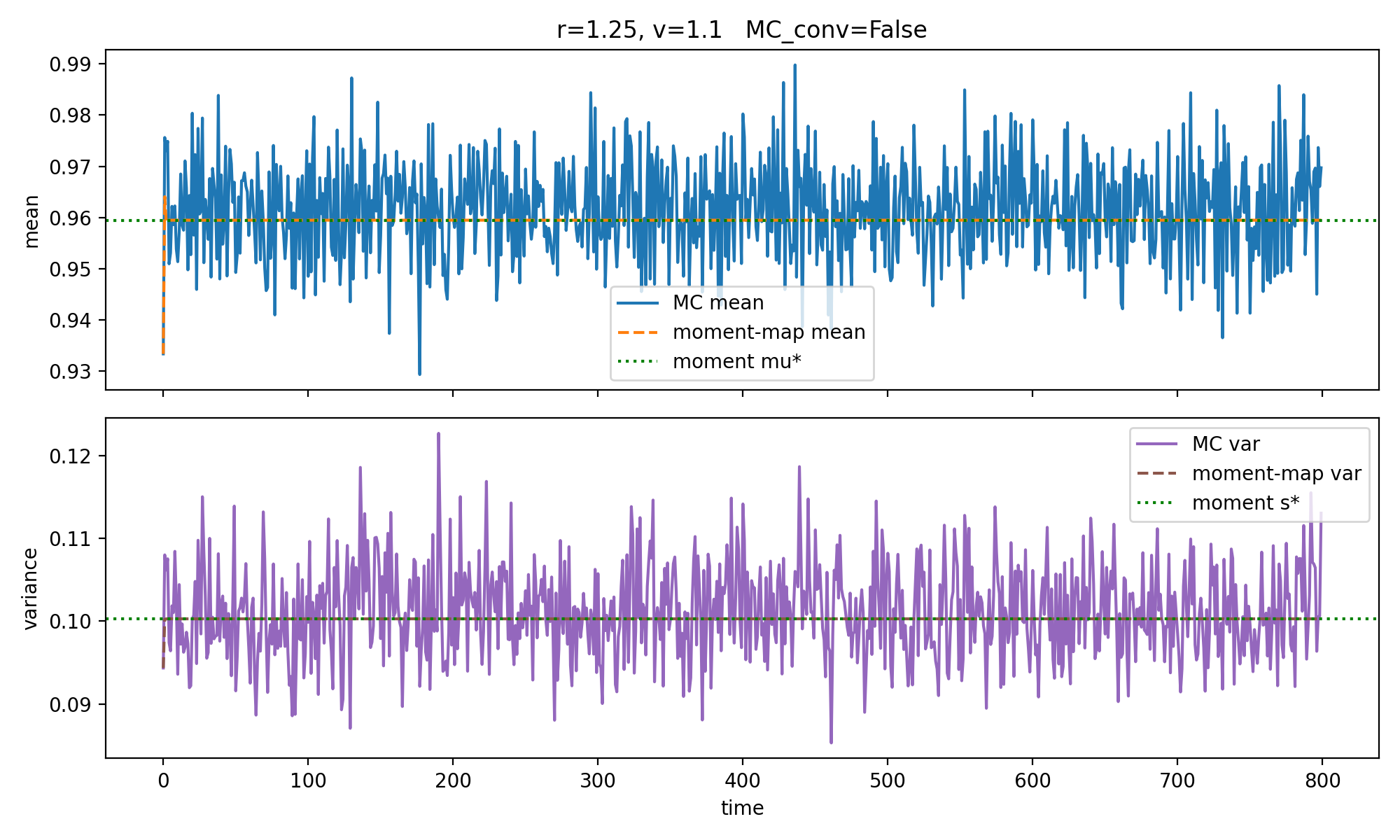}
        \caption{$r=1.25$, $v=1.1$}
        \label{fig:F4}
    \end{subfigure}
    
    \vspace{1em}  

    \begin{subfigure}[b]{0.45\textwidth}
        \centering
        \includegraphics[width=\textwidth]{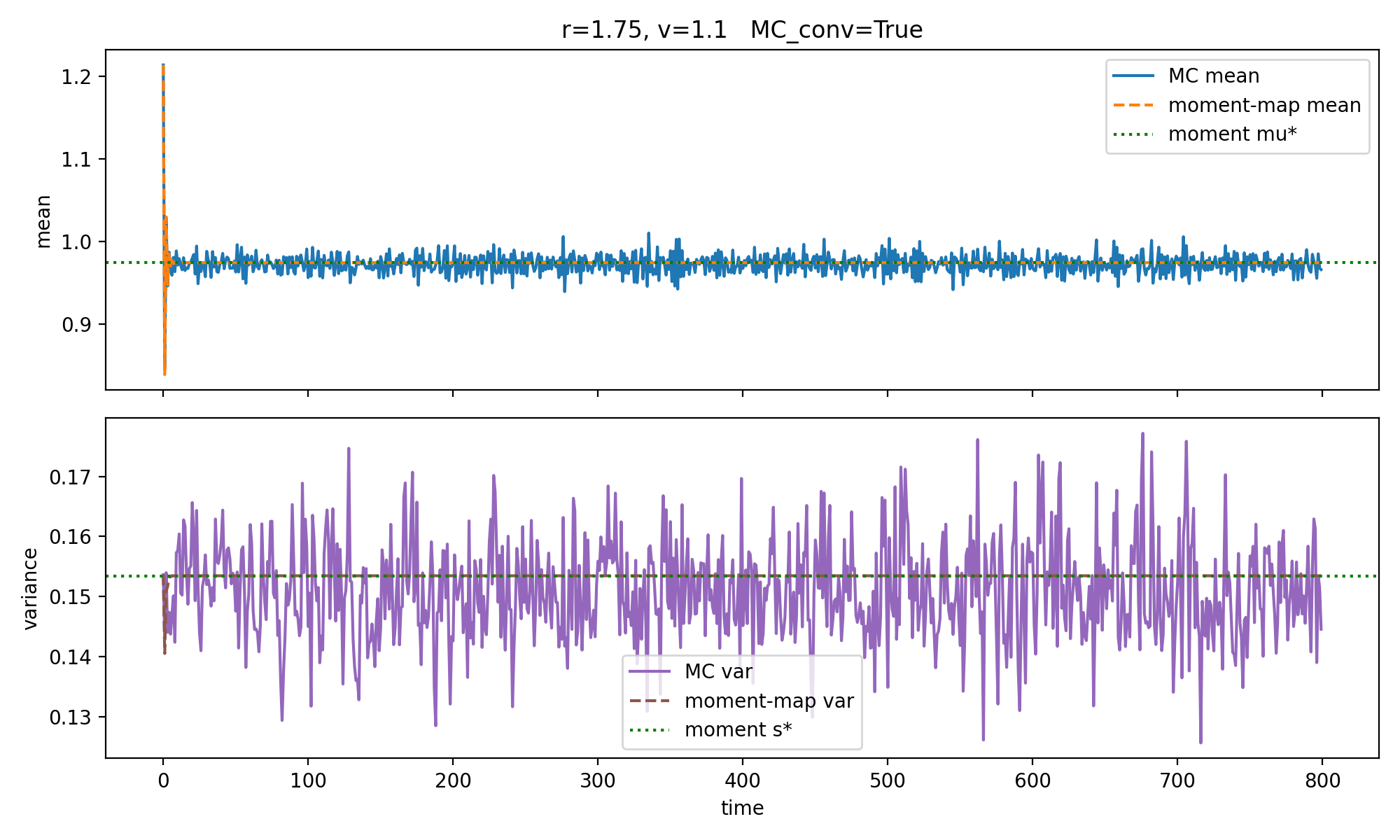}
        \caption{$r=1.75$, $v=1.1$}
        \label{fig:F5}
    \end{subfigure}
    \hfill
    \begin{subfigure}[b]{0.45\textwidth}
        \centering
        \includegraphics[width=\textwidth]{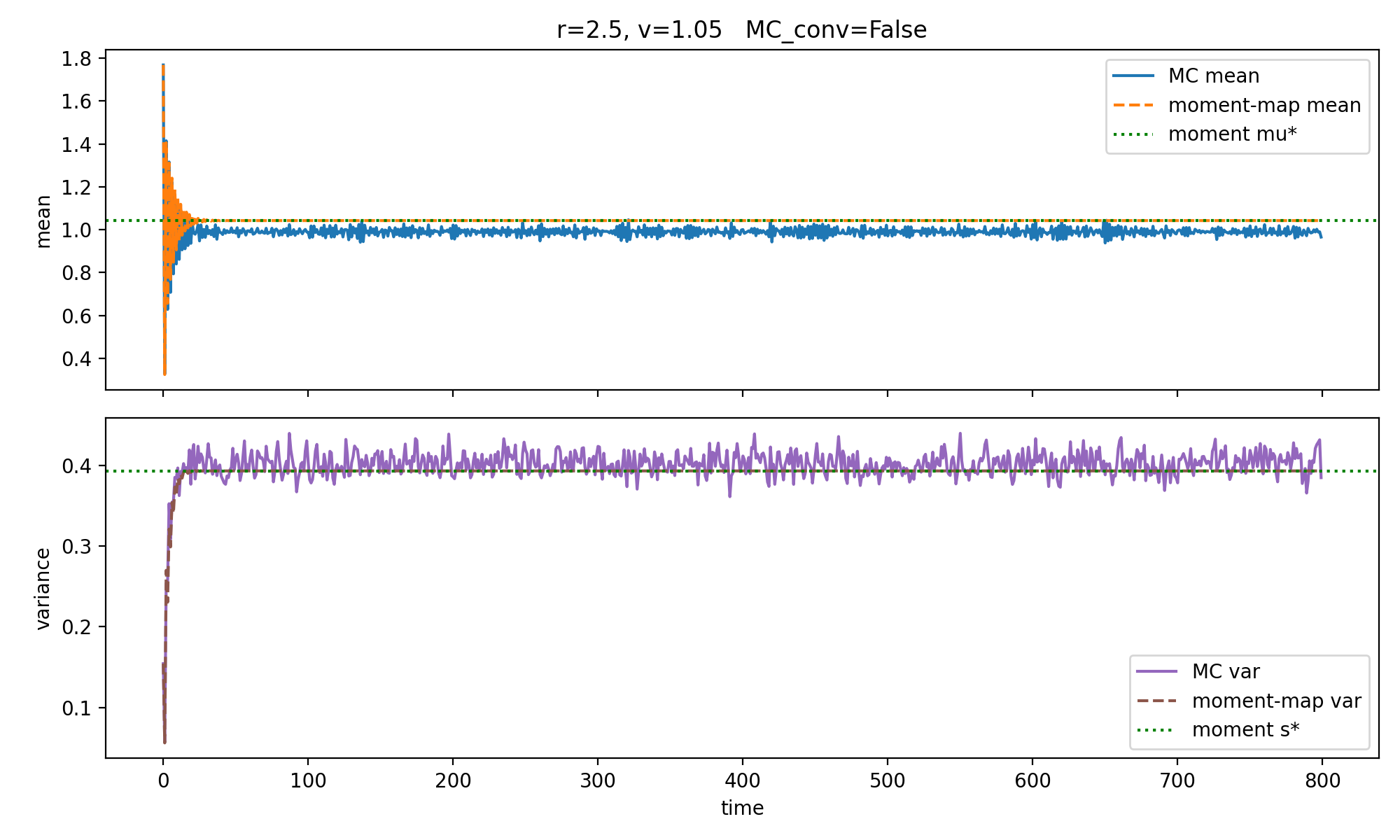}
        \caption{$r=2.5$, $v=1.05$}
        \label{fig:F6}
    \end{subfigure}
    
    \caption{Comparison of results for different values of $r$ and $v$.}
    \label{fig:carlo1}
\end{figure}

The initial population ensemble \(X_0^{(i)}\) for \(i=1,\dots,N_{\mathrm{ens}}\)
is drawn independently from a \emph{Gamma distribution}
whose mean and variance match the theoretical moment--map equilibrium
\((\mu^*,s^*)\) when it exists; otherwise, when no admissible equilibrium is available,
a narrow Gamma distribution centered near \(\mu_0=0.5\) with shape parameter \(k=10\)
and variance \(s_0=\mu_0^2/k\) is used.
This choice guarantees positive initial states and provides realistic dispersion
consistent with the closure assumption that population states remain Gamma--distributed.  For each parameter pair \((r,v)\), an ensemble of \(N_{\mathrm{ens}}\)
trajectories is evolved for \(T_{\max}\) iterations.
At every time step, the ensemble mean
\(\bar{X}_n = \frac{1}{N_{\mathrm{ens}}}\sum_i X_n^{(i)}\)
and variance are recorded.

For each parameter pair $(r,v)$, an ensemble of $N_{\mathrm{ens}}$ trajectories is evolved for $T_{\max}$ iterations. At every time step $n$, the ensemble mean $\bar{X}_n = \frac{1}{N_{\mathrm{ens}}}\sum_i X_n^{(i)}$ and variance are recorded. Convergence of the ensemble mean is declared if its coefficient of variation in the final observation window of length $w$ is less than $0.0001$. Mathematically, this requires satisfying:
\begin{equation}
\frac{\mathrm{std}\left(\{\bar{X}_n\}_{n=T_{\max}-w}^{T_{\max}}\right)}
     {\mathrm{mean}\left(\{\bar{X}_n\}_{n=T_{\max}-w}^{T_{\max}}\right)} < 0.0001,
\end{equation}
where the standard deviation and mean are calculated over the temporal window. If this condition is met, the run is considered converged to a statistically steady state; otherwise, it is marked as non-convergent.

Figures~ \ref{fig:carlo1} compare the time evolution of the ensemble mean (top panels) 
and variance (bottom panels) obtained from Monte--Carlo simulations (solid curves) 
and the deterministic moment--map predictions (dashed curves) for several combinations 
of growth rate \(r\) and noise intensity \(v\).
When the noise level is extremely small (\(v\approx1.001\)), the Monte--Carlo ensemble behaves 
almost deterministically and the ensemble mean rapidly stabilizes near the 
moment--map equilibrium. 
In this regime the convergence criterion based on a $1\%$ relative fluctuation of the mean 
is satisfied, and the system is classified as \emph{convergent} in Figure \ref{fig:F1}, \ref{fig:F3}  \ref{fig:F5}.
As \(v\) increases even slightly above unity (e.g.\ \(v=1.05\) or \(1.1\)), the multiplicative noise 
introduces persistent random fluctuations around the theoretical equilibrium. 
Although the long--term mean remains bounded and oscillates about the 
moment--map prediction, its relative standard deviation over the final observation window 
exceeds the $1\%$ threshold, so these runs are marked as \emph{non--convergent} in Figure \ref{fig:F2}, \ref{fig:F4}  \ref{fig:F6}.
For larger noise intensities (\(v=2\)), the stochastic forcing dominates: 
the ensemble mean and variance exhibit sustained variability and do not approach any 
steady value, while the deterministic moment equations still predict a stable equilibrium. 
This contrast highlights the breakdown of the moment closure approximation: while the closure assumes fluctuations are small enough to be smoothed out, strong multiplicative noise fundamentally alters the dynamics, preventing the system from settling into the stable equilibrium predicted by the model.

\section{Conclusions and Discussion}\label{discuss}

In this work, we developed a moment-based framework for analyzing stochastic population dynamics by deriving a closed system of difference equations for the Ricker model under the Gamma moment-closure approximation. We numerically assessed the validity of this approximation, demonstrating its efficacy in capturing the essential features of the original stochastic process. A primary theoretical contribution of this study is the establishment of the necessary and sufficient condition of the existence of the unique positive feasible equilibrium. By constructing a novel auxiliary function, we were able to overcome the algebraic complexity of the closed system and establish a solid analytical foundation for the moment dynamics.

From a biological perspective, the necessary and  sufficient condition, $v < (2 - e^{-r})^2$ provides explicit criteria for population persistence in fluctuating environments. This inequality marks a critical threshold: it reveals that while a higher intrinsic growth rate $r$ enhances stability, there is a fundamental limit to the environmental noise $v$ that a population can withstand. Our Monte Carlo simulations confirm these analytical predictions, illustrating how the interplay between growth rate and stochastic intensity determines stability and extinction risk. Collectively, these results offer a robust theoretical and computational tool for predicting population viability under environmental uncertainty.

As highlighted in Remark \ref{rem:limit}, the derived moment dynamical system recovers the classical deterministic Ricker map in the limit of vanishing noise ($v \to 1$) and the variance is sufficient small. Since the classical Ricker map is renowned for exhibiting a period-doubling route to chao, particularly for intrinsic growth rates $r > 2.69$ and displaying complex chaotic behavior for $r > 3$ \cite{may1976simple}. As a result, a natural direction for future research is a comprehensive bifurcation analysis of the closed moment system. While the current study focused on the existence and stability of the unique positive equilibrium, it remains an open question how the stochastic parameter $v$ interacts with high growth rates $r$ to influence these complex dynamics. Future work should investigate whether the introduction of environmental noise suppresses or shifts the onset of period-doubling bifurcations found in the deterministic limit, thereby providing a deeper understanding of how stochasticity impacts population predictability in the chaotic regime.

Although our numerical analysis strongly suggests that the local stability domain coincides exactly with the existence region defined by $v < (2-e^{-r})^2$, a rigorous analytical confirmation remains an open mathematical challenge. The algebraic complexity of the Jacobian matrix for the closed moment system currently prevents a direct derivation of the stability boundaries using the Schur inequalities. Future research should aim to overcome these technical hurdles, potentially through advanced algebraic simplification techniques or symbolic computation, to provide a complete theoretical proof that the existence of the positive feasible equilibrium implies its local stability. Establishing this analytical link would offer a fully self-contained mathematical theory for the stochastic Gamma-Ricker model without reliance on computational verification.

Despite the analytical tractability provided by the Gamma moment-closure technique, the approach relies on the fundamental assumption that the population distribution retains a unimodal, Gamma-like shape throughout its temporal evolution as highlighted by our numerical validation.  Consequently, this approximation may lose accuracy in dynamical regimes where the underlying stochastic process exhibits multi-modality, heavy tails, or complex transient behaviors that change significantly from the structural constraints of the Gamma family. To address these limitations, future research could extend the closure to higher-order moments or employ mixture models, such as a linear combination of Gamma distributions, to capture multi-modal phenomena.


\section{Declarations}
Competing Interests: The authors declare no competing interests..

\section*{Appendix}

\begin{figure}[h!]
    \centering
    
    \begin{subfigure}[b]{0.45\textwidth}
        \centering
        \includegraphics[width=\textwidth]{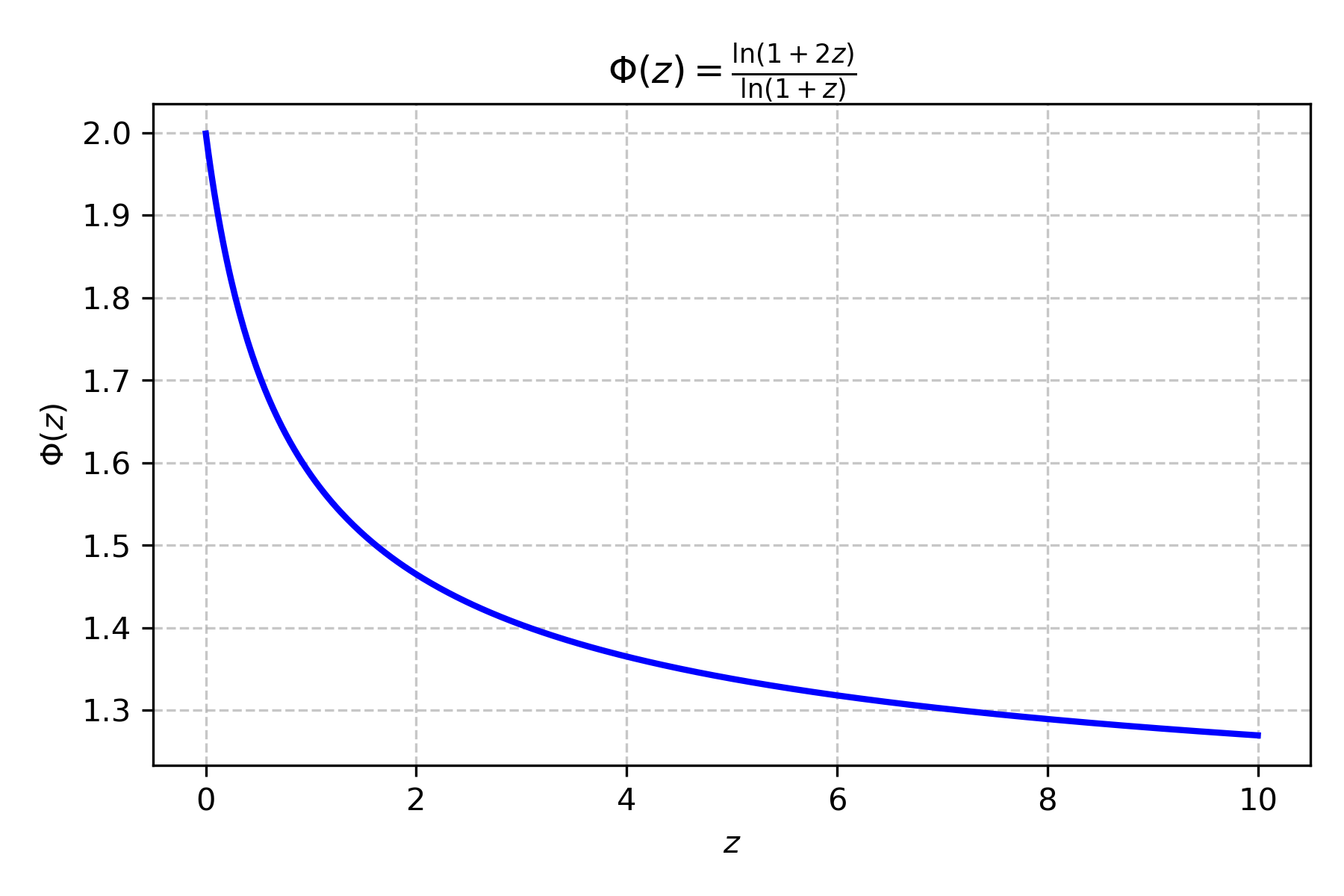}
        \caption{$\Phi(z)$ }
    \end{subfigure}
    \hfill
    \begin{subfigure}[b]{0.45\textwidth}
        \centering
        \includegraphics[width=\textwidth]{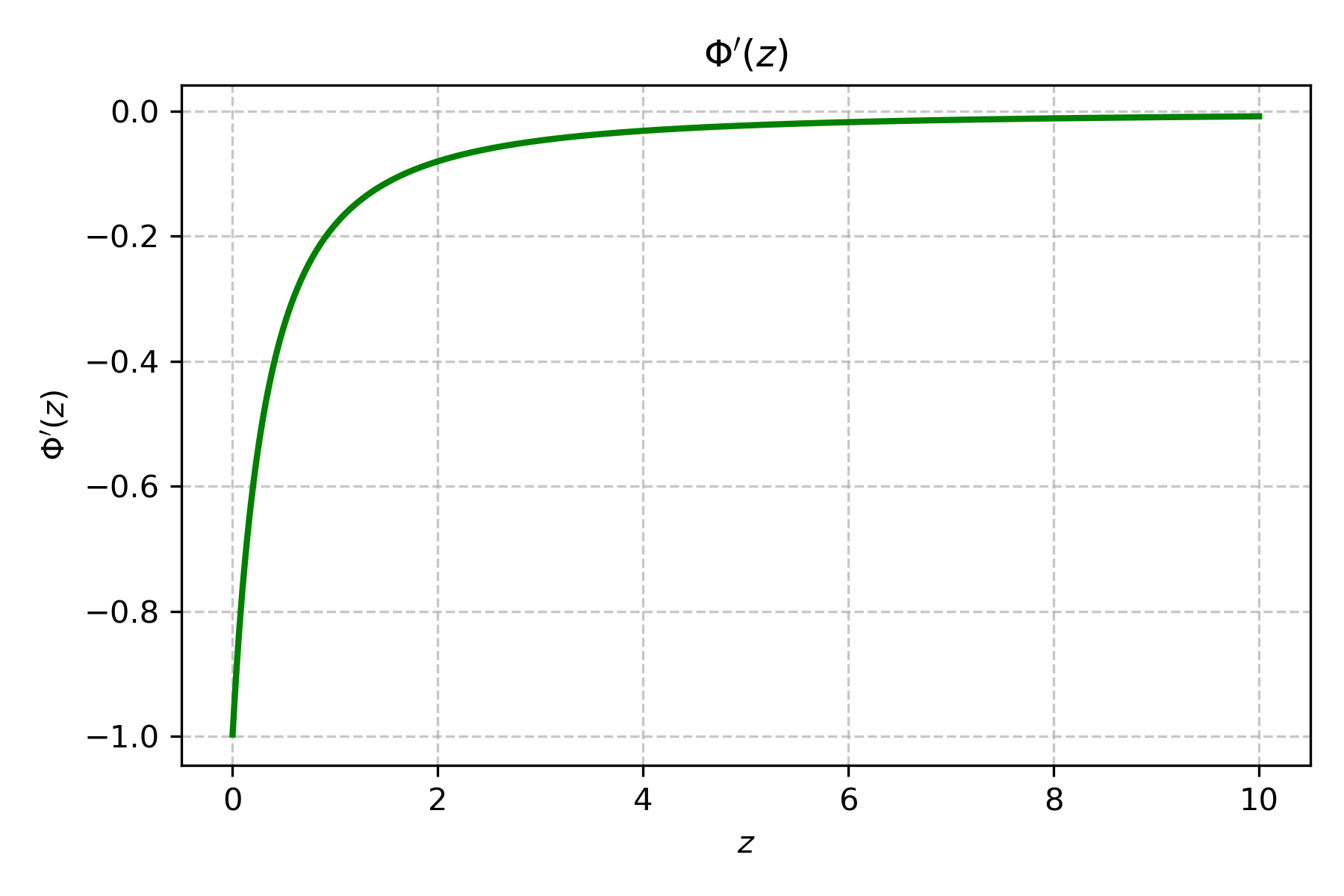}
        \caption{$\Phi'(z)$ }
   \end{subfigure}
   
   \vspace{1em}  

    \begin{subfigure}[b]{0.45\textwidth}
        \centering
        \includegraphics[width=\textwidth]{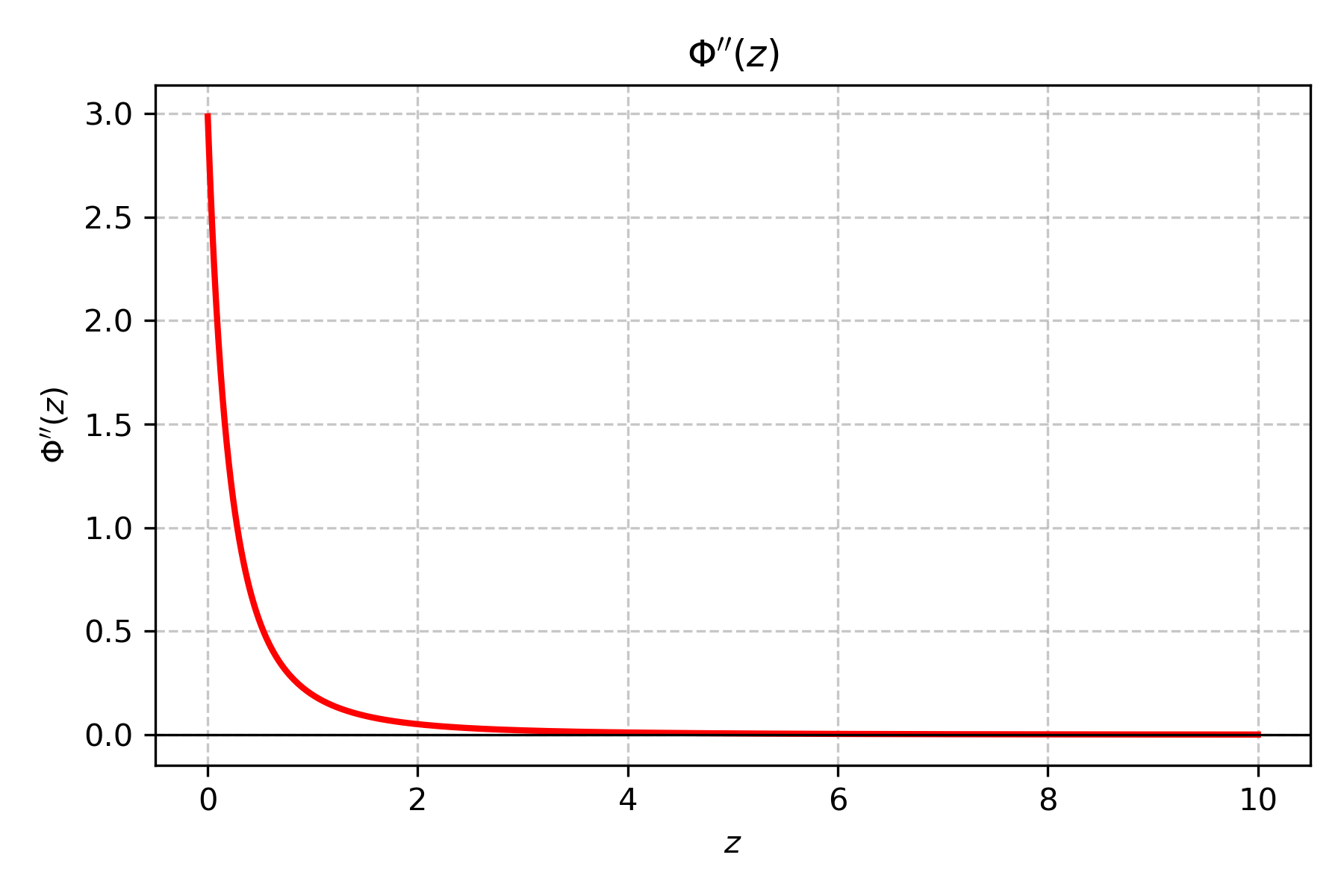}
        \caption{$\Phi''(z)$ }
    \end{subfigure}
    \hfill

   \vspace{1em}  
  \caption{Graphs of $\Phi(z)=\frac{\ln(1+2z)}{\ln(1+z)},\Phi'(z),\Phi''(z)$}
     \label{fig:phi}
\end{figure}

\begin{figure}[h!]
    \centering
    \begin{subfigure}[b]{0.45\textwidth}
        \centering
        \includegraphics[width=\textwidth]{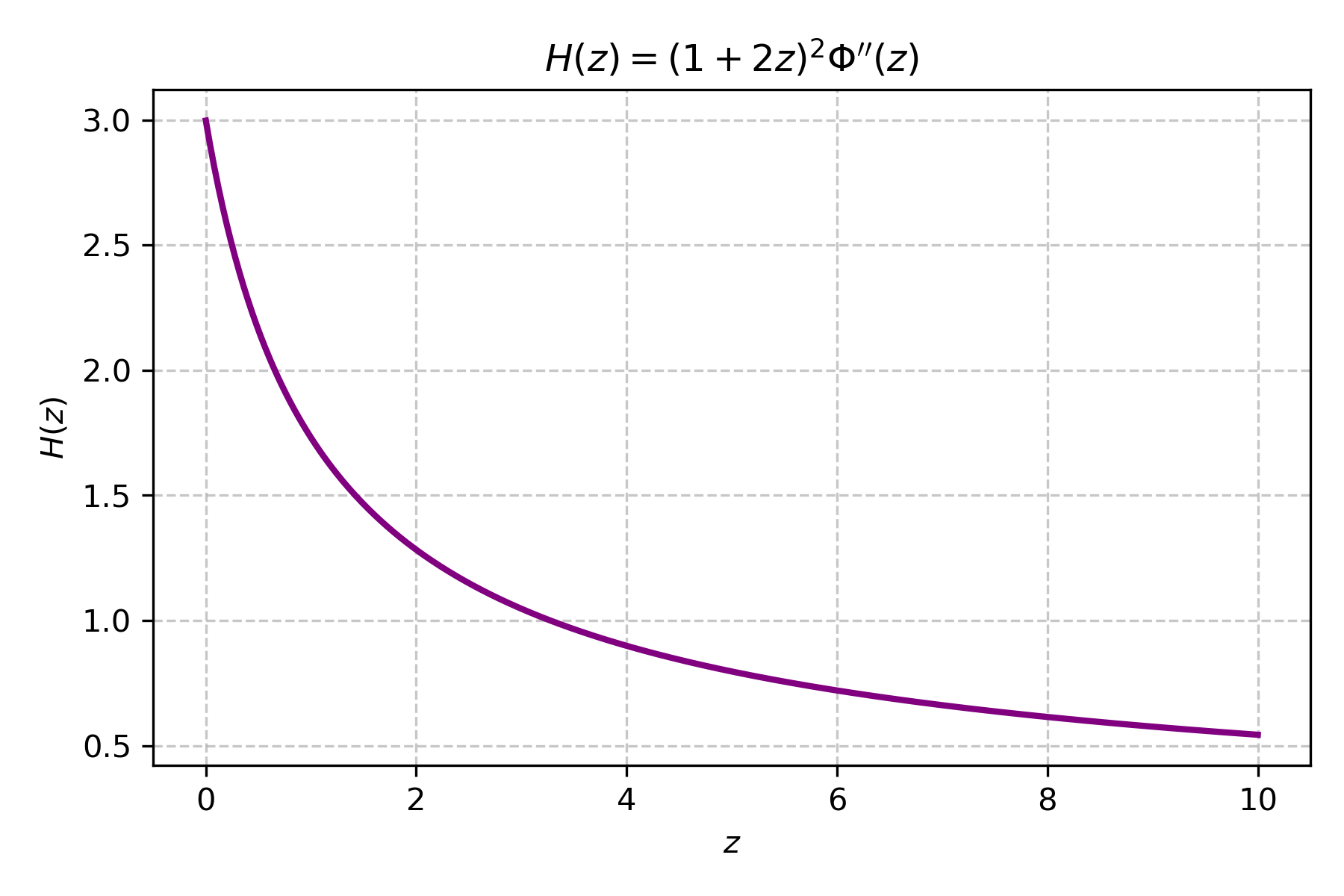}
        \caption{$H(z)$ }
        \label{fig:h3}
    \end{subfigure}
    \hfill
    \begin{subfigure}[b]{0.45\textwidth}
        \centering
        \includegraphics[width=\textwidth]{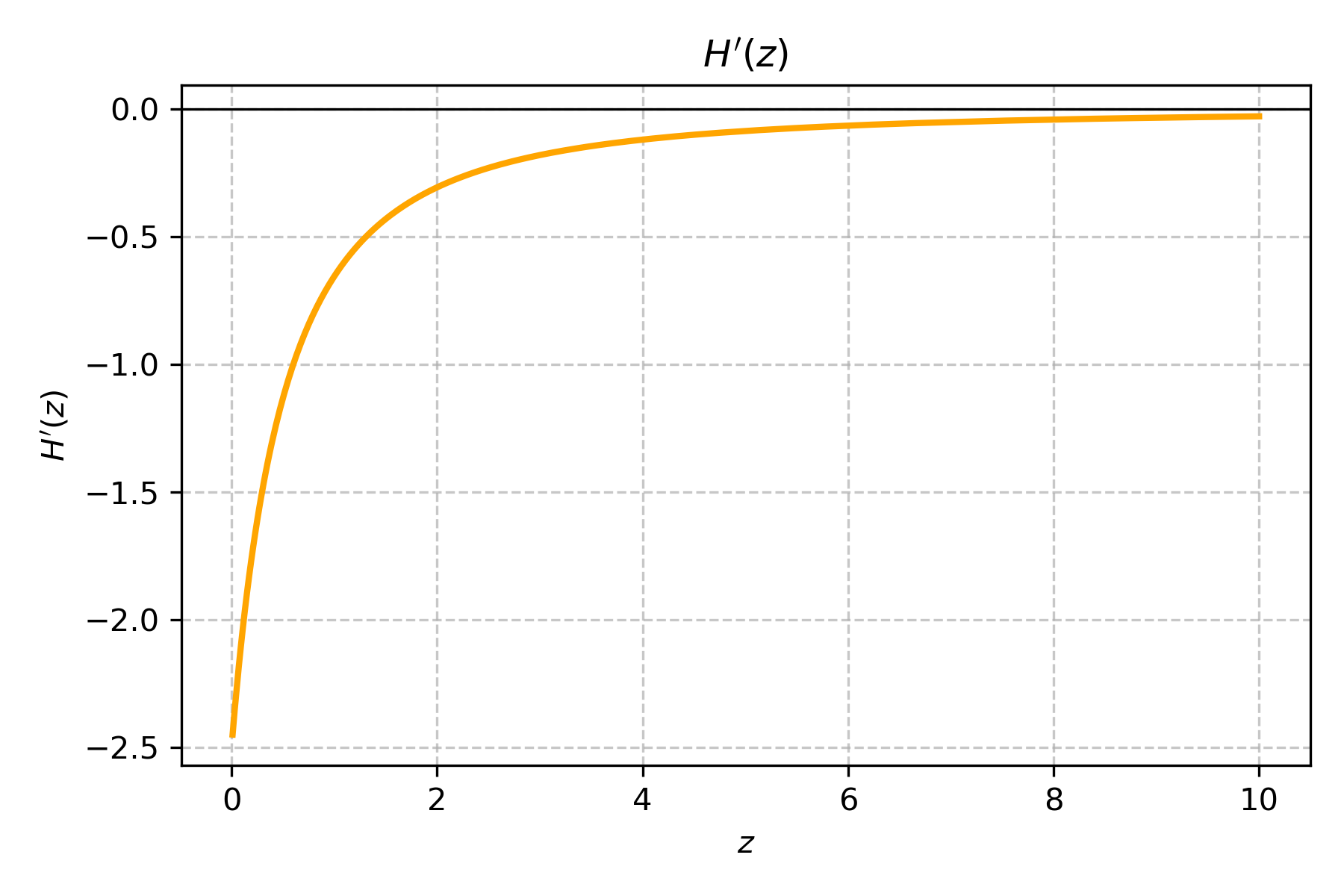}
        \caption{$H'(z)$ }
    \end{subfigure}

  \caption{Graphs of $H(z)=(1+2z)^2\Phi''(z), H'(z)$}
     \label{fig:H}
\end{figure}

\begin{lemma}\label{Phi}
 Define
\begin{equation}
  \Phi(z)\equiv\frac{\ln(1+2z)}{\ln(1+z)},\qquad H(z) \equiv \Phi''(z)(1+2z)^2,\qquad z>0
\end{equation}  
Then $\lim_{z \to 0} \Phi(z)=2$,  $\lim_{z \to 0} \Phi'(z)=-1$, $\lim_{z \to \infty} \Phi'(z) = 0$, $\lim_{z \to 0} H(z)=3$, $\lim_{z \to \infty} H(z) = 0$, $H'(z)<0$ for $z>0$.
\end{lemma}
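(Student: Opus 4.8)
The plan is to dispose of the five limit statements by local and asymptotic expansion, and to treat the monotonicity claim $H'(z)<0$ as the substantive part. Throughout I write $u=\ln(1+2z)$, $v=\ln(1+z)$, so that $\Phi=u/v$, with $u'=2/(1+2z)$, $v'=1/(1+z)$, $u''=-4/(1+2z)^2$, $v''=-1/(1+z)^2$, and I record the closed forms
\[
\Phi'=\frac{u'v-uv'}{v^2},\qquad
\Phi''=\frac{(u''v-uv'')v-2(u'v-uv')v'}{v^3}.
\]
For the behaviour near $z=0$ I would Taylor expand the two logarithms, $\ln(1+z)=z-\tfrac12 z^2+\tfrac13 z^3-\cdots$ and $\ln(1+2z)=2z-2z^2+\tfrac83 z^3-\cdots$, and divide to obtain $\Phi(z)=2-z+\tfrac32 z^2+O(z^3)$. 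Reading off coefficients gives $\Phi(0^+)=2$, $\Phi'(0^+)=-1$, and $\Phi''(0^+)=3$; since $(1+2z)^2\to1$, this yields $H(0^+)=3$ and settles the first, second, and fourth claims at once.

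For $z\to\infty$ I would use $u\sim\ln(2z)$, $v\sim\ln z$, $u',v'\sim 1/z$, and $u'',v''\sim-1/z^2$. The numerator $u'v-uv'\sim -(\ln2)/z\to0$ measured against $v^2\sim(\ln z)^2\to\infty$ forces $\Phi'(z)\to0$, which is the third claim. Carrying the expansion of $\Phi''$ one order further gives $\Phi''(z)\sim(\ln2)/\bigl(z^2(\ln z)^2\bigr)$, so that $H(z)=\Phi''(z)(1+2z)^2\sim 4\ln2/(\ln z)^2\to0$, the fifth claim; this also records $H\to0^{+}$, consistent with the qualitative picture in Figures~\ref{fig:phi} and~\ref{fig:H}.

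The remaining and hardest assertion is $H'(z)<0$ on $(0,\infty)$. The approach is to form $H$ in closed form from the expression for $\Phi''$ above, differentiate, and clear denominators: after multiplying through by an explicitly positive combination of $(1+z)$, $(1+2z)$, and powers of $v=\ln(1+z)$, the inequality $H'(z)<0$ reduces to showing that a single explicit function $P(z)$, assembled from $\ln(1+z)$, $\ln(1+2z)$, and rational terms, carries the constant sign dictating $H'<0$. The main obstacle is precisely the sign of this transcendental $P$: it mixes two logarithms with polynomial coefficients and admits no elementary factorization. I would pin the sign down by evaluating the boundary values $P(0^+)$ and $\lim_{z\to\infty}P(z)$ and then excluding interior sign changes through a monotonicity argument on $P$, differentiating once or twice more until reaching a manifestly signed rational expression and integrating back up the chain while tracking the boundary data, aided by the elementary bound $2\ln(1+z)>\ln(1+2z)$ (equivalently $(1+z)^2>1+2z$) already invoked in the main text. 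This multilayer endpoint-plus-monotonicity bookkeeping is what makes the argument lengthy, and it is the step I expect to consume the bulk of the work.
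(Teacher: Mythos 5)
Your treatment of the five limit statements is correct and essentially equivalent to the paper's: the Taylor expansion $\Phi(z)=2-z+\tfrac32 z^2+O(z^3)$ gives $\Phi(0^+)=2$, $\Phi'(0^+)=-1$, $\Phi''(0^+)=3$ (the paper reaches the same values by repeatedly differentiating the identity $\Phi g=h$ at $z=0$), and your large-$z$ asymptotics $\Phi'(z)\sim -\ln 2/(z(\ln z)^2)$ and $H(z)\sim 4\ln 2/(\ln z)^2$ match the paper's computation exactly. That part is fine.

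The genuine gap is the claim $H'(z)<0$, which you correctly identify as the substantive part but then only sketch. Reducing the inequality to the sign of a single transcendental combination $P(z)$ of $\ln(1+z)$, $\ln(1+2z)$, and rational terms is precisely where the difficulty is concentrated, not where it is resolved: the paper carries out the same reduction, writing $H'(z)=-Q(z)/\bigl[(1+z)^3(1+2z)g(z)^4\bigr]$ with an explicit $Q$, and then establishes $Q(z)>0$ only by plotting it and by computer-algebra cross-validation of the closed forms --- it never executes the ``endpoint values plus repeated differentiation down to a manifestly signed rational expression'' scheme you propose. You have not verified that this scheme terminates for this particular $Q$: the numerator mixes $(1+2z)^2 h(z)$ against $(1+z)(1+2z)g(z)$ with coefficients that are themselves quadratic in $g(z)$, and the elementary bound $2\ln(1+z)>\ln(1+2z)$ is not by itself strong enough to sign the full expression (the term $-g(z)^2$ multiplying $h(z)$ and the competing $(1+4z)g(z)$ term require genuinely sharper two-sided logarithm estimates). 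Until you either carry out that chain of differentiations and confirm each stage has the required sign and boundary data, or supply an alternative rigorous argument, the monotonicity of $H$ --- and hence the lemma's key conclusion --- remains unproven in your write-up, just as it remains computer-assisted in the paper.
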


\begin{proof}
The graphs of $\Phi(z)$, $\Phi'(z),\Phi''(z) $ are presented in Figures \ref{fig:phi}, and the graphs of $H(z)$, $H'(z)$ are shown in Figures \ref{fig:H} . The functions $\Phi(z)$ and $H(z)$ are elementary functions composed of logarithms and rational polynomials. Although they exhibit indeterminate forms at $z=0$ (specifically $0/0$), these singularities are removable. Thus, the functions are well-behaved: they are analytic on $z > 0$ and possess finite, well-defined limits at both $z=0$ and $z \to \infty$.  Let $$g(z) = \ln(1+z)$$ and $$h(z) = \ln(1+2z)$$ and have 
\[
\Phi(z) = \frac{h(z)}{g(z)}.
\]
The derivatives of the logarithmic terms are:
\[
\begin{aligned}
g'(z) &= \frac{1}{1+z}, & h'(z) &= \frac{2}{1+2z}, \\
g''(z) &= \frac{-1}{(1+z)^2}, & h''(z) &= \frac{-4}{(1+2z)^2}
\end{aligned}
\]
and
\[
\begin{aligned}
g'''(z) &= \frac{2}{(1+z)^3}, & h'''(z) &= \frac{16}{(1+2z)^3}.
\end{aligned}
\]

Using the quotient rule on $\Phi(z) = h(z)/g(z)$, we have
\[
\Phi'(z) = \frac{\frac{2}{1+2z}g(z) - \frac{1}{1+z}h(z)}{g(z)^2} = \frac{2(1+z)g(z) - (1+2z)h(z)}{(1+z)(1+2z)g(z)^2}.
\]
To compute $\Phi''(z)$, we differentiate the standard quotient form $\Phi' = (h'g - hg')/g^2$ again using the quotient rule:
\[
\Phi''(z) = \frac{g(z)[h''(z)g(z) - h(z)g''(z)] - 2g'(z)[h'(z)g(z) - h(z)g'(z)]}{g(z)^3}.
\]
Substituting the specific forms of $g', h', g'', h''$ and simplifying by clearing the denominators $(1+z)^2$ and $(1+2z)^2$, we obtain:
\begin{equation}\label{Phi-second}
\Phi''(z) = \frac{N(z)}{(1+z)^2 (1+2z)^2 g(z)^3},
\end{equation}
where $N(z)$ is given by:
\[
N(z) = (1+2z)^2 (g(z)+2)h(z) \;-\; 4(1+z)\big[(1+z)g(z) + (1+2z)\big]g(z).
\]
By definition $H(z) = (1+2z)^2 \Phi''(z)$, we have
\begin{equation}\label{H-form}
H(z) = \frac{(1+2z)^2 (g(z)+2)h(z) - 4(1+z)\big[(1+z)g(z) + (1+2z)\big]g(z)}{(1+z)^2 g(z)^3}.
\end{equation}
Differentiating $H(z)$ gives the following explicit expression:
\begin{equation}\label{Hprime-form}
H'(z) = -\frac{Q(z)}{(1+z)^3 (1+2z) g(z)^4},
\end{equation}
where the numerator $Q(z)$ is given by the following polynomial-logarithmic expression:
\[
\begin{aligned}
    Q(z)   &=  2(1+2z)^2 h(z) \Big[ 3(1+2z) + (2z-1)g(z) - g(z)^2 \Big] \\
          &\quad - 2(1+z)(1+2z) g(z) \Big[ 6(1+2z) + (1+4z)g(z) \Big]
\end{aligned}
\]
with $g(z) = \ln(1+z)$ and $h(z) = \ln(1+2z)$.

For the sake of brevity, we skip the detailed algebraic derivation and provide the resulting closed forms for $H(z)$ and $H'(z)$. To ensure validity, we cross-validated these derived formulas against the symbolic computation of $H(z)$ and $H'(z)$ based directly on the definition $(1+2z)^2 \Phi''(z)$ using computer algebra systems. The results, shown in Figure \ref{fig:MatchH}, confirm an exact match. Crucially, this verification relies on exact algebraic manipulation rather than numerical approximation; the fact that the symbolic difference simplifies to zero serves as a rigorous, computer-assisted proof of the derivation.

\begin{figure}[h!]
    \centering
    \begin{subfigure}[b]{0.45\textwidth}
        \centering
        \includegraphics[width=\textwidth]{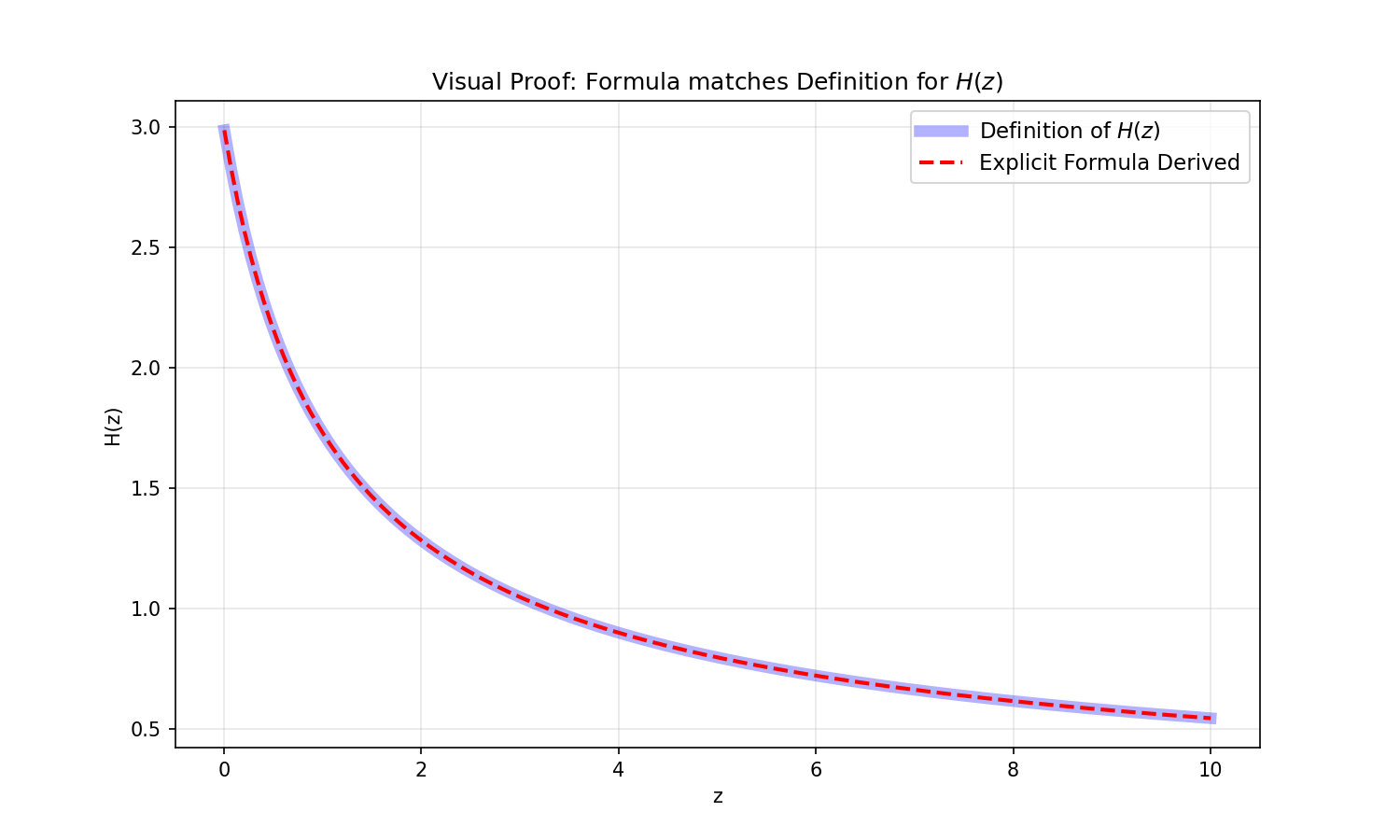}
        \caption{$H(z)$ }
        \label{fig:mahafdplot}
    \end{subfigure}
    \hfill
    \begin{subfigure}[b]{0.45\textwidth}
        \centering
        \includegraphics[width=\textwidth]{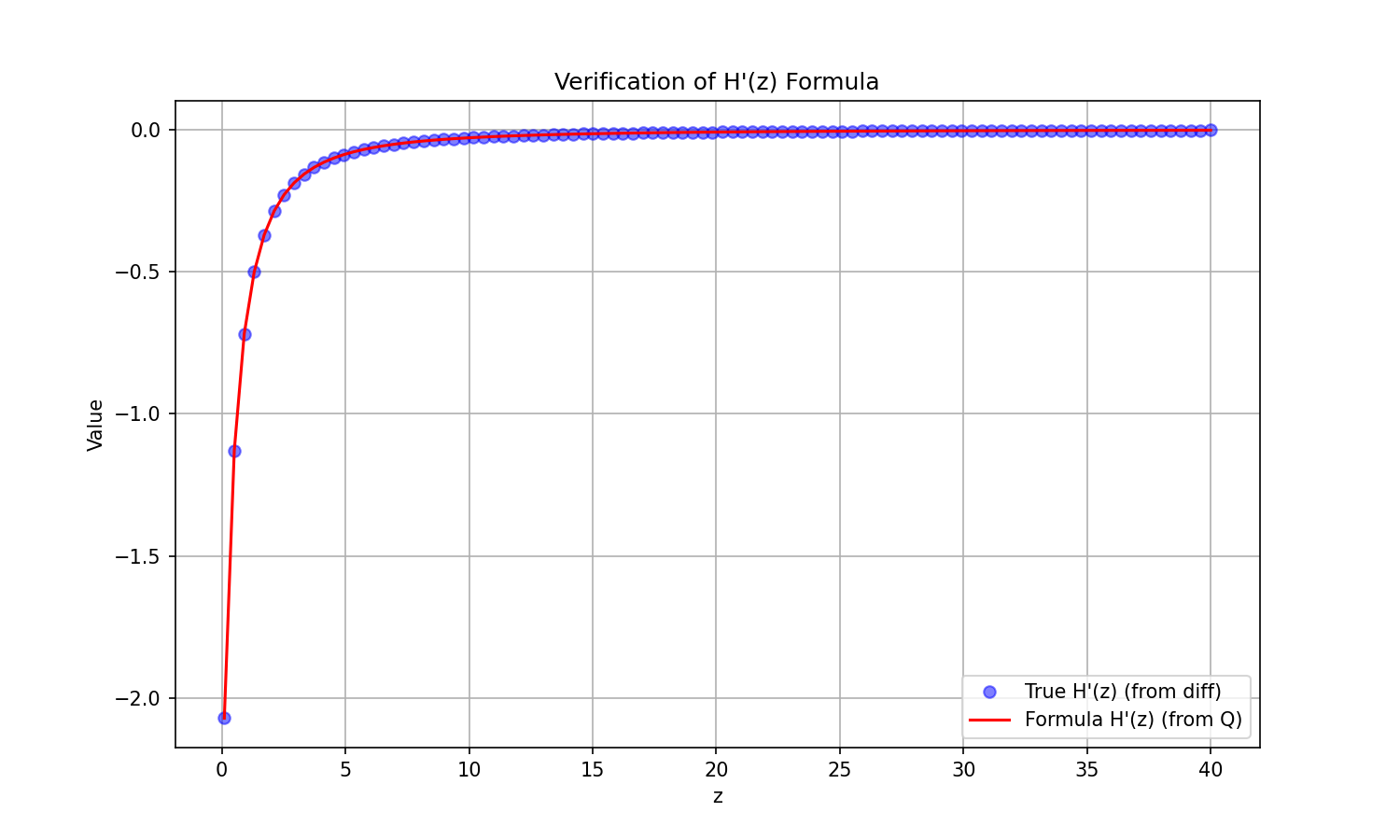}
        \caption{$H'(z)$ }
       \label{fig:mahplot}
    \end{subfigure}

  \caption{Validating the algebraic derivation of $H(z),H'(z)$ against Python symbolic computation}
     \label{fig:MatchH}
\end{figure}

\begin{figure}[h!]
    \centering
        \centering
        \includegraphics[width=\textwidth]{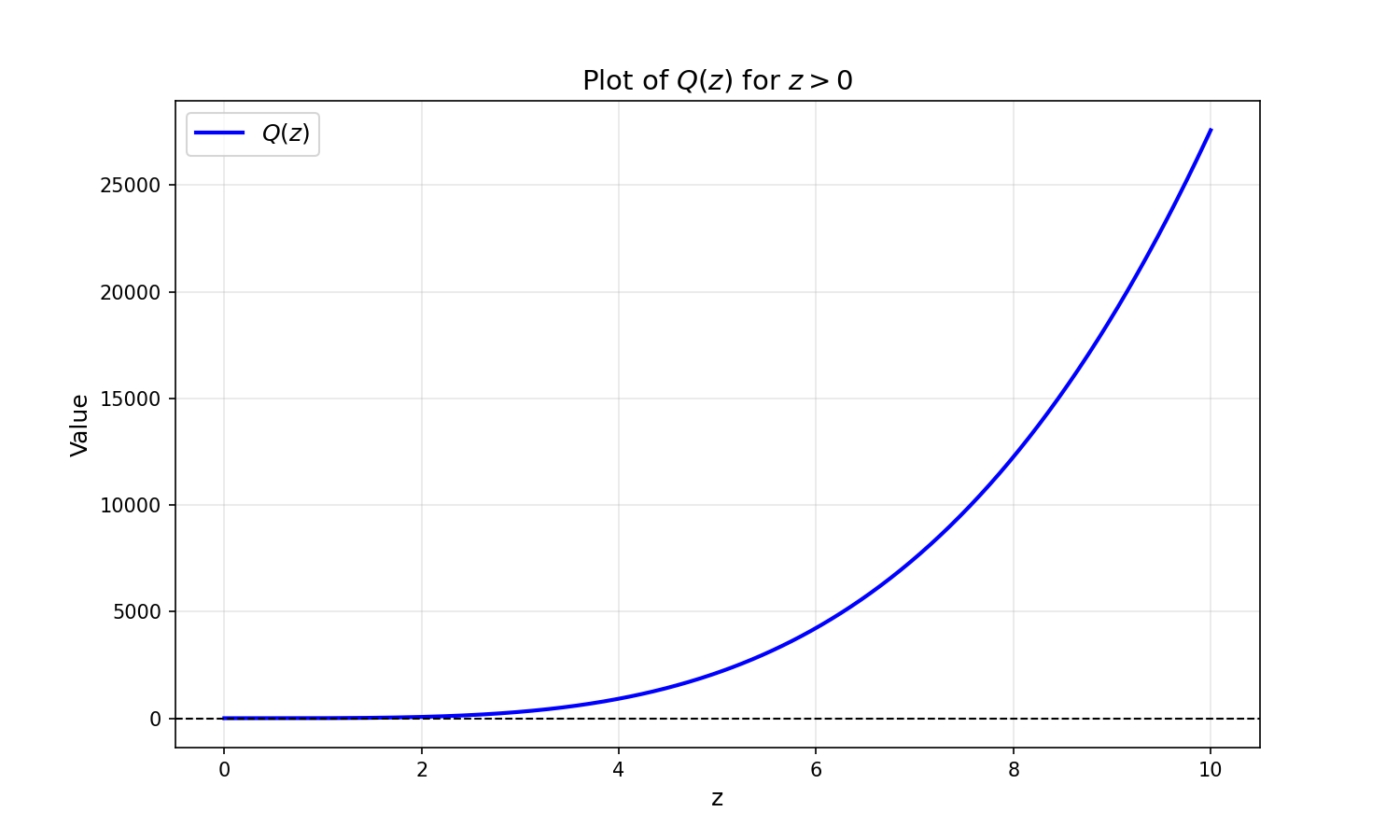}
        \caption{$Q(z)$ }
  \caption{Graph of $Q(z)$}
     \label{fig:Qz}
\end{figure}
Figure \ref{fig:mahafdplot} validates the algebraic derivation of $H(z)$ in \eqref{H-form}. The thick, semi-transparent blue line represents the ``Ground Truth,'' calculated by Python symbolically differentiating $\Phi(z)$ twice according to the definition $(1+2z)^2 \Phi''(z)$. The red line represents the values calculated using the derived fraction formula. The perfect superposition of the red dashes on the blue curve confirms that the derived formula is an exact algebraic equivalent of the definition. 

Figure \ref{fig:mahplot} validates the derived formula for $H'(z)$ in \eqref{Hprime-form}. The blue dotted line shows the derivative of the definition, computed symbolically by Python (effectively finding $\Phi'''$). The red line represents the values calculated using the derived fraction formula. The exact overlap confirms that the derivative of the formula behaves precisely as the definition requires. Since this curve is strictly negative, it confirms that $H(z)$ is monotonically decreasing and that the auxiliary function $Q(z)$ (which determines the sign of $H'$) is strictly positive in shown in Figure \ref{fig:Qz}.

Now we use the formula to compute the limits at $0$ and $\infty$, which are clearly demonstrated in Figures \ref{fig:phi},and Figures \ref{fig:H}. We formally employed L'Hôpital's rule to prove the limits at $z=0$ and $z \to \infty$.  Because the resulting analytic expressions for the higher derivatives become excessively lengthy, here we choose a simple approach to find these limits.  

First note that, for large $z$,  
\[
g(z) \sim \ln z, \quad h(z) \sim \ln z + \ln 2, \quad \text{and} \quad h(z) - g(z) \sim \ln 2.
\]
and derive the limits for $\Phi'(z)$ and $H(z)$ at $\infty$.

\begin{align}
\label{lim-Phi}
\lim_{z \to \infty} \Phi'(z) 
&= \lim_{z \to \infty} \frac{2(1+z)g(z) - (1+2z)h(z)}{(1+z)(1+2z)g(z)^2} \notag \\
&\sim \lim_{z \to \infty} \frac{2z g(z) - 2z h(z)}{2z^2 (\ln z)^2} 
= \lim_{z \to \infty} \frac{-2z [h(z)-g(z)]}{2z^2 (\ln z)^2} \notag \\
&= \lim_{z \to \infty} \frac{-\ln 2}{z (\ln z)^2} = 0. \\[10pt]
\label{lim-H}
\lim_{z \to \infty} H(z) 
&= \lim_{z \to \infty} \frac{(1+2z)^2 (g(z)+2)h(z) - 4(1+z)\big[(1+z)g(z) + (1+2z)\big]g(z)}{(1+z)^2 g(z)^3} \notag \\
&\sim \lim_{z \to \infty} \frac{4z^2 (g+2)h - 4z^2 (g+2)g}{z^2 (\ln z)^3} 
= \lim_{z \to \infty} \frac{4z^2 (g+2)[h(z)-g(z)]}{z^2 (\ln z)^3} \notag \\
&= \lim_{z \to \infty} \frac{4 (\ln z)(\ln 2)}{(\ln z)^3} 
= \lim_{z \to \infty} \frac{4 \ln 2}{(\ln z)^2} = 0.
\end{align}

At $z=0$, the required derivatives at $z=0$ are:
\begin{align*}
    g'(0) &= 1, & g''(0) &= -1, & g'''(0) &= 2, \\
    h'(0) &= 2, & h''(0) &= -4, & h'''(0) &= 16.
\end{align*}
Using L'Hôpital's rule:
\[
\Phi(0) = \lim_{z \to 0} \frac{h'(z)}{g'(z)} = \frac{2}{1} = 2.
\]
Differentiating $\Phi g = h$ twice gives $h'' = \Phi'' g + 2\Phi' g' + \Phi g''$. At $z=0$ ($g(0)=0$):
\[
h''(0) = 2\Phi'(0)g'(0) + \Phi(0)g''(0) \implies -4 = 2\Phi'(0)(1) + (2)(-1) \implies \Phi'(0) = -1.
\]
Differentiating a third time: $h''' = \Phi''' g + 3\Phi'' g' + 3\Phi' g'' + \Phi g'''$. At $z=0$:
\[
16 = 0 + 3\Phi''(0)(1) + 3(-1)(-1) + 2(2) \implies 16 = 3\Phi''(0) + 7 \implies \Phi''(0) = 3.
\]
Thus, $H(0) = \lim_{z \to 0} (1+2z)^2 \Phi''(z) = 3$.

\end{proof}


\begin{thebibliography}{99}

\bibitem{allen2010}
\newblock L. J. S. Allen,
\newblock An Introduction to Stochastic Processes with Applications to Biology,
\newblock (Chapman and Hall/CRC) (2010).

\bibitem{Arnold1998}
\newblock L. Arnold,
\newblock Random Dynamical Systems,
\newblock Springer (1998)
\doilink{https://link.springer.com/book/10.1007/978-3-662-12878-7}


\bibitem{Braverman2013}
\newblock E. Braverman, A. Rodkina,
\newblock Difference equations of Ricker and logistic types under bounded stochastic perturbations with positive mean,
\newblock \emph{Computers and Mathematics with Applications}, \textbf{66}(11) (2013), 2281--2294.
\doilink{https://doi.org/10.1016/j.camwa.2013.06.014}

\bibitem{elaydi2005}
\newblock S. N. Elaydi,
\newblock An Introduction to Difference Equations, 3rd ed.,
\newblock (Springer, New York) (2005).



\bibitem{kot2001elements}
\newblock M. Kot,
\newblock Elements of Mathematical Ecology,
\newblock (Cambridge University Press) (2001).

\bibitem{Kuznetsov}
\newblock Y. A. Kuznetsov,
\newblock Elements of Applied Bifurcation Theory, 3rd ed.,
\newblock (Springer-Verlag, New York) (2004).

\bibitem{strogatz2018nonlinear}
\newblock S. H. Strogatz,
\newblock Nonlinear Dynamics and Chaos: With Applications to Physics, Biology, Chemistry, and Engineering,
\newblock (CRC Press) (2018).

\bibitem{Lakatos_2015}
\newblock E. Lakatos, A. Ale, P. D. Kirk, M. P. Stumpf,
\newblock Multivariate moment closure techniques for stochastic kinetic models,
\newblock \emph{Journal of Chemical Physics}, \textbf{143}(9) (2015), 094107.
\doilink{https://doi.org/10.1063/1.4929837}

\bibitem{may1976simple}
\newblock R. M. May,
\newblock Simple mathematical models with very complicated dynamics,
\newblock \emph{Nature}, \textbf{261} (1976), 459--467.
\doilink{https://doi.org/10.1038/261459a0}

\bibitem{murray2002}
\newblock J. D. Murray,
\newblock Mathematical Biology I: An Introduction,
\newblock (Springer) (2002).

\bibitem{Schreiber2021}
\newblock S. J. Schreiber, S. Huang, J. Jiang, H. Wang,
\newblock Extinction and quasi-stationarity for discrete-time, endemic SIS and SIR models,
\newblock \emph{SIAM Journal on Applied Mathematics}, \textbf{81}(5) (2021), 2195--2217.
\doilink{https://doi.org/10.1137/20M1339015}

\bibitem{wang2025}
\newblock H. Wang,
\newblock Equilibrium analysis of discrete stochastic population models with gamma distribution,
\newblock \emph{Mathematical Biosciences}, \textbf{381} (2025), 109398.
\doilink{https://doi.org/10.1016/j.mbs.2025.109398}

\bibitem{wang2025chaos}
\newblock H. Wang, Y. Wang,
\newblock Detecting transitions from steady states to chaos with gamma distribution,
\newblock \emph{International Journal of Bifurcation and Chaos}, in press (2025).
\doilink{https://doi.org/10.1142/S0218127425501639}

\bibitem{Wang2026}
\newblock H. Wang, E. Wang,
\newblock Stability and bifurcation of difference equations from stochastic logistic models,
\emph{Mathematical Biosciences and Engineering}, \textbf{23} (2026), 449--473.
\doilink{https://doi.org/10.3934/mbe.2026018}


\bibitem{wikipedia_gamma}
Wikipedia Contributors [2024]
``Gamma distribution,'' {\it Wikipedia, The Free Encyclopedia}, 
\url{https://en.wikipedia.org/wiki/Gamma_distribution}.

\bibitem{Socha2008}
\newblock L. Socha,
\newblock Moment Equations for Nonlinear Stochastic Dynamic Systems (NSDS),
in: \emph{Linearization Methods for Stochastic Dynamic Systems},
Lecture Notes in Physics, vol. 730, Springer, Berlin, Heidelberg (2008).
\url{https://doi.org/10.1007/978-3-540-72997-6_4}

\bibitem{Lakatos_2015}
\newblock E. Lakatos, A. Ale, P. D. Kirk, M. P. Stumpf,
\newblock Multivariate moment closure techniques for stochastic kinetic models,
\emph{Journal of Chemical Physics}, \textbf{143}(9) (2015), 094107.
\doilink{https://doi.org/10.1063/1.4929837}

\bibitem{Matis1996}
\newblock J. H. Matis, T. R. Kiffe,
\newblock On approximating the moments of the equilibrium distribution of a stochastic logistic model,
\emph{Biometrics}, \textbf{52}(3) (1996), 980--991.
\doilink{https://doi.org/10.2307/2533059}


\bibitem{Hyon2008}
\newblock Y. Hyon, Q. Du, C. Liu,
\newblock An Enhanced Macroscopic Closure Approximation to the Micro-Macro FENE Model for Polymeric Materials,
\newblock \emph{Multiscale Modeling \& Simulation}, \textbf{7}(2) (2008), 978--1002.
\doilink{https://doi.org/10.1137/070708287}

\bibitem{Du2005}
\newblock Q. Du, C. Liu, P. Yu,
\newblock FENE Dumbbell Model and Its Several Linear and Nonlinear Closure Approximations,
\newblock \emph{Multiscale Modeling \& Simulation}, \textbf{4}(3) (2005), 709--731.
\doilink{https://doi.org/10.1137/040612038}


\bibitem{Marrec2023}
\newblock L. Marrec, C. Bank, T. Bertrand,
\newblock Solving the stochastic dynamics of population growth,
\emph{Ecology and Evolution}, \textbf{13}(8) (2023), e10295.
\doilink{https://doi.org/10.1002/ece3.10295}

\bibitem{Makarem2017}
\newblock H. Makarem, H. N. Pishkenari, G. R. Vossoughi,
\newblock A modified Gaussian moment closure method for nonlinear stochastic differential equations,
\emph{Nonlinear Dynamics}, \textbf{89} (2017), 2609–2620.
\doilink{https://doi.org/10.1007/s11071-017-3608-9}

\bibitem{Kuehn2016}
\newblock C. Kuehn,
\newblock Moment Closure—A Brief Review,
\emph{Control of Self-Organizing Nonlinear Systems} (eds. E. Schöll, S. Klapp, P. Hövel),
Springer, Cham (2016), 253--271.
\doilink{https://doi.org/10.1007/978-3-319-28028-8\_13}


\bibitem{ricker1954stock}
\newblock W. E. Ricker,
\newblock Stock and recruitment,
\newblock \emph{Journal of the Fisheries Research Board of Canada}, \textbf{11}(5) (1954), 559--623.
\doilink{https://doi.org/10.1139/f54-039}

\bibitem{Yan2024}
\newblock D. Yan, M. He, R. A. Cheke, Q. Zhang, S. Tang,
\newblock A stochastic hormesis Ricker model and its application to multiple fields,
\newblock \emph{Chaos, Solitons \& Fractals}, \textbf{185} (2024), 115131.
\doilink{https://doi.org/10.1016/j.chaos.2024.115131}

\bibitem{Hognas1997}
\newblock G. H\"ogn\"as,
\newblock On the quasi-stationary distribution of a stochastic Ricker model,
\newblock \emph{Stochastic Processes and their Applications}, \textbf{70} (1997), 243--263.
\doilink{https://doi.org/10.1016/S0304-4149(97)00064-1}

\bibitem{Koetke2020}
\newblock L. J. Koetke, A. Duarte, F. W. Weckerly,
\newblock Comparing the Ricker and $\theta$-logistic models for estimating elk population growth,
\newblock \emph{Natural Resource Modeling}, \textbf{33} (2020), e12270.
\doilink{https://doi.org/10.1111/nrm.12270}

\bibitem{Kornadt1991}
\newblock O. Kornadt, S. J. Linz, M. L\"ucke,
\newblock Ricker model: Influence of periodic and stochastic parametric modulation,
\newblock \emph{Physical Review A}, \textbf{44} (1991), 940.
\doilink{https://doi.org/10.1103/PhysRevA.44.940}

\bibitem{Taylor2012}
\newblock M. Taylor, P. L. Simon, D. M. Green, T. House, I. Z. Kiss,
\newblock From Markovian to pairwise epidemic models and the performance of moment closure approximations,
\newblock \emph{Journal of Mathematical Biology}, \textbf{64} (2012), 1021--1042.
\doilink{https://doi.org/10.1007/s00285-011-0443-3}

\bibitem{brent1973algorithms}
Richard~P. Brent.
\newblock {\em Algorithms for Minimization without Derivatives}.
\newblock Prentice-Hall, Englewood Cliffs, NJ, 1973.

\end{thebibliography}
\end{document}